\newtheorem{Theorem}{Theorem}
\newtheorem{proposition}{Proposition}[section]
\newtheorem{lemma}[proposition]{Lemma}
\newtheorem{corollary}[proposition]{Corollary}
\newtheorem{theorem}[proposition]{Theorem}
\theoremstyle{definition}
\newtheorem{definition}[proposition]{Definition}
\numberwithin{equation}{section}
\title{Branch actions and the structure lattice}
\author{Jorge Fariña-Asategui and Rostislav Grigorchuk}
\address{Jorge Fariña-Asategui: Centre for Mathematical Sciences, Lund University, 223 62 Lund, Sweden -- Department of Mathematics, University of the Basque Country UPV/EHU, 48080 Bilbao, Spain}
\email{jorge.farina\_asategui@math.lu.se}
\address{Rostislav Grigorchuk: Department of Mathematics, Texas A\&M University, 77843 College Station, U.S.A.}
\email{grigorch@math.tamu.edu}
\keywords{Branch actions, the structure lattice, Boolean algebras, Stone spaces}
\subjclass[2020]{Primary: 20E08, 20E15  Secondary: 06E15}
\thanks{The first author is supported by the Spanish Government, grant PID2020-117281GB-I00, partly with FEDER funds. The first author also acknowledges support from the Walter Gyllenberg Foundation from the Royal Physiographic Society of Lund. The second  author  is  supported by Travel
Support for Mathematicians, grant MP-TSM-00002045 from Simons Foundation.}
\begin{document}

\begin{abstract}

J.\,S. Wilson  proved in 1971 an isomorphism  between  the  structural  lattice  associated to  a  group belonging  to his  second  class  of  groups  with  every  proper  quotient  finite and the  Boolean  algebra  of  clopen  subsets  of  Cantor's ternary set. In this paper we generalize this isomorphism to the  class  of  branch  groups. Moreover, we show that for every faithful  branch action of a group $G$ on a spherically homogeneous rooted tree $T$ there  is  a canonical $G$-equivariant isomorphism  between the Boolean algebra associated  to  the  structure lattice of $G$ and the Boolean algebra of clopen   subsets  of the boundary of $T$. 

\end{abstract}

\maketitle

\section{introduction}
\label{section: introduction}

Branch groups are groups acting level-transitively on spherically homogeneous rooted trees whose subnormal subgroup structure resembles the one of the full automorphism group of the tree. The class of branch groups was introduced by the second author in 1997, as a common generalization of the different examples of Burnside groups and groups of intermediate growth introduced in the 1980s by the second author, Gupta and Sidki, and Suschansky among others; compare \cite{GrigorchukBurnside, GrigorchukMilnor, GuptaSidki, Suschansky}.

Just-infinite branch groups constitute one of the three classes of just-infinite groups, i.e. those infinite groups whose proper quotients are all finite; see \cite{GrigorchukNewHorizons}. This classification of just-infinite groups by the second author in \cite{GrigorchukNewHorizons} relies on Wilson's classification in terms of their structure lattice in \cite{Wilson71}. A closely related object to the structure lattice of a branch group $G$ is the structure graph, which is the subgraph of the structure lattice consisting of the basal subgroups of $G$. The structure graph of a branch group was introduced by Wilson in~\cite{NewHorizonsWilson}.

In the last decades, the study of the structure graph of a branch group has attracted a lot of attention, as the structure graph encodes all the different branch actions of the group on a spherically homogeneous rooted tree; compare \cite{AlejandraCSP,  GrigorchukWilson, Hardy, NewHorizonsWilson, WilsonBook}. For instance, Hardy proved in \cite[Theorem 15.4.2]{Hardy} (see also \cite[Theorem~5.2]{WilsonBook}) that a group admits a unique branch action on the $p$-adic tree for a prime $p\ge 2$ if and only if its structure graph is isomorphic as a graph to the $p$-adic tree. Sufficient conditions for the uniqueness of branch actions on the $p$-adic tree were given by the second author and Wilson in \cite{GrigorchukWilson}. A complete description of the structure graph of a branch group was given by Hardy in his PhD thesis \cite[Lemma 15.1.1]{Hardy}; see also \cite[Lemma 5.1(a)]{WilsonBook}.

In contrast, there is no explicit description of the structure lattice of a branch group. Indeed, to the best of our knowledge, the only known structural result on the structure lattice of a branch group is the following theorem proved by Wilson in 1971:

\begin{theorem}[{see {\cite[Theorem 6]{Wilson71}}}]
\label{theorem: Wilson Thm 6}
    Let $G\le \mathrm{Aut}~T$ be a just-infinite branch group. Then its structure lattice $\mathcal{L}(G)$ is isomorphic to the lattice of clopen subsets of Cantor's ternary set.
\end{theorem}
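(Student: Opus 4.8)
The plan is to construct an explicit, canonical isomorphism between the Boolean algebra of clopen subsets of $\partial T$ and the structure lattice $\mathcal{L}(G)$, and then read off the statement from the fact that $\partial T$ is homeomorphic to Cantor's ternary set. Recall that $\mathcal{L}(G)$ is the lattice of commensurability classes of basal subgroups of $G$, ordered by inclusion up to finite index, with join given by the class of the subgroup generated by two representatives, meet by the class of their intersection, and with a complementation operation making it a Boolean algebra. Since $T$ is spherically homogeneous with all branching indices at least $2$ and has infinitely many levels, its boundary $\partial T$ is a compact, perfect, totally disconnected metrizable space, i.e.\ a Cantor set, so $\mathrm{Clopen}(\partial T)$ is exactly the Boolean algebra of finite unions of cones $\partial T_v$, where $\partial T_v$ denotes the set of boundary rays through a vertex $v$.

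The first key step is to define the map $\Phi\colon \mathrm{Clopen}(\partial T)\to \mathcal{L}(G)$ on cones by $\Phi(\partial T_v) = [\mathrm{rist}_G(v)]$, and to extend it to a general clopen set. Every clopen $U$ can be written as $U = \bigsqcup_{v\in S}\partial T_v$ for a finite set $S$ of vertices lying at a common level $n$, and I would set $\Phi(U) = \big[\prod_{v\in S}\mathrm{rist}_G(v)\big]$. To see this is well defined across levels, I would use the \emph{branch refinement relation}: if $v$ has children $v_1,\dots,v_d$ then $\mathrm{rist}_G(v)$ is commensurable with $\prod_{i=1}^{d}\mathrm{rist}_G(v_i)$, because the restriction of $\mathrm{rist}_G(v)$ to the subtree $T_v$ is itself branch and its first-level rigid stabilizer has finite index. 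Under this normalization, union and intersection of clopen sets at a common level correspond to union and intersection of the vertex sets $S$, which match the product and intersection of the associated basal subgroups up to finite index, so $\Phi$ is a homomorphism of lattices. Complementation is handled by the branch property directly: for $S\subseteq L_n$, the product
\[
\Big(\prod_{v\in S}\mathrm{rist}_G(v)\Big)\times\Big(\prod_{w\in L_n\setminus S}\mathrm{rist}_G(w)\Big) = \mathrm{rist}_G(n)
\]
has finite index in $G$, so the class of $\prod_{w\in L_n\setminus S}\mathrm{rist}_G(w)$ is the Boolean complement of $\Phi(U)$, and $\Phi$ preserves complements.

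Injectivity would follow from showing $\Phi(U)=[1]$ forces $U=\emptyset$: every $\mathrm{rist}_G(v)$ is infinite (as $G$ is infinite and branch), and rigid stabilizers attached to disjoint sets of vertices meet in a finite-index-trivial way, so noncommensurable. The main obstacle, and where just-infiniteness enters essentially, is \textbf{surjectivity}: I must show every class in $\mathcal{L}(G)$ is represented by a finite product $\prod_{v\in S}\mathrm{rist}_G(v)$. Here I would exploit that in a just-infinite group every nontrivial subnormal subgroup has finite index, which forces each basal subgroup to be commensurable, after passing to a sufficiently deep level, with a product of vertex rigid stabilizers; this is the step requiring the most care, as it uses the interplay between the just-infinite condition and the direct-product structure of basal subgroups. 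As a fallback that proves the isomorphism but forfeits canonicity, one may instead verify abstractly that $\mathcal{L}(G)$ is countable (the products of rigid stabilizers over finite vertex sets form a countable cofinal family) and atomless (the class of $\mathrm{rist}_G(v)$ strictly dominates that of $\mathrm{rist}_G(v_1)$ for a child $v_1$), and then invoke the classical uniqueness of the countable atomless Boolean algebra, which is realized precisely as $\mathrm{Clopen}$ of Cantor's ternary set.
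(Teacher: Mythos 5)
Your forward construction (cones $\mapsto$ classes of rigid stabilizers, refinement across levels, complementation via $\mathrm{Rist}_G(n)\le_f G$) is sound and is exactly the map $\Phi^{-1}$ of \cref{Theorem: canonical isomorphism of boolean algebras} — though your justification of the refinement relation is shaky: the restriction of $\mathrm{rist}_G(v)$ to $T_v$ need not be branch (it need not even act level-transitively on $T_v$; cf.\ \cref{lemma: leemann rigid}, which only gives transitivity below level $n+N_n$). The correct argument is simply $\prod_i \mathrm{rist}_G(v_i)=\mathrm{Rist}_G(n+1)\cap \mathrm{rist}_G(v)\le_f \mathrm{rist}_G(v)$. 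The genuine gap, however, is surjectivity, and the fact you invoke for it is false: in a just-infinite group it is \emph{not} true that every nontrivial subnormal subgroup has finite index. In any branch group, $\mathrm{rist}_G(v)$ for $v$ below the root is nontrivial, $2$-subnormal (normal in $\mathrm{Rist}_G(n)\trianglelefteq G$), and of infinite index. Worse, your claim is self-defeating: if every nontrivial subnormal subgroup had finite index, then every nonzero class in $\mathcal{L}(G)$ would equal $[G]$ and $\mathcal{L}(G)\cong \mathbf{2}$, contradicting the very theorem you are proving. (The property "nontrivial subnormal $\Rightarrow$ finite index" characterizes \emph{hereditarily} just-infinite groups, which are precisely not branch.) Your fallback is circular for the same reason: that the products of rigid stabilizers form a countable family does not make $\mathcal{L}(G)$ countable unless every class is represented by such a product — which is exactly the surjectivity you are trying to sidestep; "cofinal" buys you nothing here, and even atomlessness for an \emph{arbitrary} nonzero class $[H]$ needs to locate a rigid stabilizer inside $H$.

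What actually fills this hole in the paper is a two-step argument valid for all branch groups (just-infiniteness plays no essential role, contrary to your expectation). First, \cref{proposition: support of subnormal is clopen}: for any subgroup $H$ with finitely many conjugates, $\mathrm{Supp}(H)$ is \emph{clopen}; this is the delicate step, proved by contradiction — if the support were an infinite disjoint union of cones, one uses \cref{lemma: leemann rigid} to conjugate $H$ by elements of rigid stabilizers and manufacture infinitely many distinct conjugates. Second, Francoeur's \cref{lemma: subnormal contains rist'..'}: a $k$-subnormal subgroup contains $\mathrm{rist}_G(v)^{(k)}$ for every moved vertex $v$, and since $\big(\prod_i\mathrm{rist}_G(v_i)\big)^{(k)}\le_{\mathrm{va}}\prod_i\mathrm{rist}_G(v_i)$, one gets \cref{proposition: representation of subnormal}: $[H]=\big[\prod_{v\in \mathrm{Supp}(H)}\mathrm{rist}_G(v)\big]$, which is surjectivity. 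Note this last step also exposes a definitional slip in your setup: the relation $\sim$ of the paper is not commensurability but "the intersection virtually contains the commutator subgroup of each", so your injectivity argument must use that $\mathrm{rist}_G(v)$ is not virtually \emph{abelian}, not merely that it is infinite (cf.\ \cref{corollary: support is well-defined in the lattice}). With \cref{proposition: representation of subnormal} in hand, either of your two endgames (the explicit isomorphism, or countable-atomless plus uniqueness of the countable atomless Boolean algebra, which is Wilson's original route via Stone duality) does conclude the proof; without it, neither does.
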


The isomorphism in \cref{theorem: Wilson Thm 6} is given by Stone's representation theorem and the homeomorphism in \cite[Theorem 2.97]{Hocking} between the Stone space of the structure lattice and Cantor's ternary set. However, the isomorphism in \cref{theorem: Wilson Thm 6} does neither preserve the branch action of the group $G$ nor describe explicitly the equivalence classes in the structure lattice $\mathcal{L}(G)$.

Our goal is to give a complete description of the structure lattice of a branch group. Indeed, we obtain a $G$-equivariant and explicit canonical isomorphism of Boolean algebras characterising the structure lattice of a (non-necessarily just-infinite) branch group $G$:

\begin{Theorem}
\label{Theorem: canonical isomorphism of boolean algebras}
    Let $\rho:G\to \mathrm{Aut}~T$ be a branch action of a group $G$ on a spherically homogeneous rooted tree $T$ and $\mathrm{Bool}(\partial T)$ the Boolean algebra of clopen subsets of the boundary of $T$. Then there exists a canonical $G$-equivariant isomorphism $\Phi:\mathcal{L}(G)\to \mathrm{Bool}(\partial T)$ with inverse $\Phi^{-1}:\mathrm{Bool}(\partial T)\to \mathcal{L}(G)$ given by
    \begin{align*}
    \Phi:\mathcal{L}(G)&\to \mathrm{Bool}(\partial T)\quad \quad\text{and}&\Phi^{-1}:\mathrm{Bool}(\partial T)&\to \mathcal{L}(G)\\
        [H]&\mapsto \mathrm{Supp}(H),&C&\mapsto \Big[\prod_{v\in C} \mathrm{rist}_{\rho(G)}(v)\Big],
    \end{align*}
    where $\mathrm{Supp}(H)$ is the support of $H$, i.e. the elements in $\partial T$ not fixed by $\rho(H)$.
\end{Theorem}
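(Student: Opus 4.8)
The plan is to reduce the whole statement to a single geometric lemma about rigid stabilizers and then check that $\Phi$ and $\Phi^{-1}$ are mutually inverse and compatible with the Boolean operations and the $G$-action. Throughout I identify a clopen set $C\in\mathrm{Bool}(\partial T)$ with the unique finite antichain $A_C\subseteq T$ of minimal vertices for which $C=\bigsqcup_{v\in A_C}\partial T_v$, where $\partial T_v$ is the cylinder of boundary points through $v$; thus $\Phi^{-1}(C)$ is the \emph{basal subgroup} $B_{A_C}:=\prod_{v\in A_C}\mathrm{rist}_{\rho(G)}(v)$. The geometric core is the identity $\mathrm{Supp}(\mathrm{rist}_{\rho(G)}(v))=\partial T_v$ for every vertex $v$. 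The inclusion $\subseteq$ is immediate, since elements of $\mathrm{rist}_{\rho(G)}(v)$ are supported on $T_v$; for $\supseteq$ I would use that $\rho$ is a branch action, so $\mathrm{rist}_{\rho(G)}(v)$ acts level-transitively on $T_v$. No end of $\partial T_v$ can then be fixed, since a fixed end would be fixed vertex by vertex, contradicting transitivity on the (at least two) children of some vertex along it. This lemma shows at once that supports of basal subgroups are clopen with $\mathrm{Supp}(B_A)=\partial T_A:=\bigsqcup_{v\in A}\partial T_v$, whence $\Phi\circ\Phi^{-1}=\mathrm{id}_{\mathrm{Bool}(\partial T)}$; it also makes $\Phi^{-1}$ well defined, because refining $A_C$ replaces $\mathrm{rist}_{\rho(G)}(v)$ by the product over the children of $v$, which is of finite index in $\mathrm{rist}_{\rho(G)}(v)$ (branch condition) and hence lies in the same class of $\mathcal{L}(G)$.

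Next I would establish that $\Phi$ is well defined and that $\Phi^{-1}\circ\Phi=\mathrm{id}$. Two observations turn the support into an invariant of the class: first, $\mathrm{Supp}$ is conjugation-equivariant, $\mathrm{Supp}(gHg^{-1})=\rho(g)\,\mathrm{Supp}(H)$, which simultaneously yields the $G$-equivariance of $\Phi$; second, basal subgroups and their finite-index subgroups have equal support, since by level-transitivity their action on their support has no nontrivial finite orbit, so no finite-index subgroup can acquire a fixed end. The substantive step is to show that every class of $\mathcal{L}(G)$ contains a basal representative $B_A$ with $\mathrm{Supp}(H)=\partial T_A$. For this I would invoke the structure theory of branch groups recalled earlier: a representative $H$ is sandwiched between a product of rigid stabilizers and a finite-index overgroup supported on the same union of cylinders, which pins down the antichain $A$ realizing $\mathrm{Supp}(H)$. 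Granting this, $\Phi^{-1}(\Phi([H]))=[B_A]=[H]$, so $\Phi$ is a bijection with inverse $\Phi^{-1}$.

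Finally I would verify that $\Phi$ is an isomorphism of Boolean algebras by reducing to basal representatives $B_A,B_{A'}$. The join $[B_A]\vee[B_{A'}]=[\langle B_A,B_{A'}\rangle]$ has support $\partial T_A\cup\partial T_{A'}$, since a product of elements each supported in $\partial T_A$ or $\partial T_{A'}$ moves only ends of the union, while $B_A$ and $B_{A'}$ already move all of $\partial T_A$ and $\partial T_{A'}$. The meet $[B_A]\wedge[B_{A'}]=[B_A\cap B_{A'}]$ has support $\partial T_A\cap\partial T_{A'}$: the intersection is supported on $\partial T_A\cap\partial T_{A'}$, and it contains $B_{A\wedge A'}$ (product over the common refinement), whose support is exactly $\partial T_A\cap\partial T_{A'}$ by the lemma. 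The bottom $[1]$ maps to $\emptyset$ and the top $[G]$ maps to $\partial T$, as $\rho(G)$ is level-transitive and hence fixes no end, and faithfulness gives that an element fixing every end is trivial; consequently the complement $[B_A]'=[B_{A^{c}}]$ (with $\partial T_{A^{c}}=\partial T\setminus\partial T_A$) maps to the set-theoretic complement. Together these show $\Phi$ preserves $\vee,\wedge,0,1$ and complementation.

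I expect the main obstacle to be the reduction to basal subgroups in the second paragraph: identifying, for an arbitrary representative of a class of $\mathcal{L}(G)$, the precise antichain realizing its support and proving the class equality $[H]=[B_A]$ purely group-theoretically from the branch structure. Once this dictionary between classes and antichains is in place, the geometric lemma and the remaining well-definedness and homomorphism checks are comparatively routine.
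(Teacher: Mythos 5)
Your architecture (a dictionary between classes in $\mathcal{L}(G)$ and finite antichains of vertices, i.e.\ basal subgroups) is the same as the paper's, but the proposal omits the proof's actual core, which you yourself flag as ``the main obstacle'' and then only gesture at. For an arbitrary $H\in L(G)$, $\mathrm{Supp}(H)$ is a priori only \emph{open}: it is a disjoint union of possibly infinitely many cone sets, and nothing in the definitions makes it closed. That the antichain is finite (equivalently, that $\mathrm{Supp}(H)$ is clopen) is exactly where the hypothesis of finitely many conjugates enters, and it is the paper's main technical argument (\cref{proposition: support of subnormal is clopen}): assuming infinitely many cones, one constructs via \cref{lemma: leemann rigid} a sequence of conjugates $H^{g_n}$ pairwise distinguished by their actions at deeper and deeper levels, contradicting finiteness of the conjugacy class. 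There is also no off-the-shelf ``sandwich'' statement of the kind you invoke for general $H\in L(G)$ (Hardy's and Wilson's structure results concern basal subgroups); the sandwich is manufactured from Francoeur's lemma (\cref{lemma: subnormal contains rist'..'}), namely that a $k$-subnormal $H$ contains $\mathrm{rist}_G(v)^{(k)}$ for every vertex $v$ it moves, after which the chain $X^{(k)}\le_{\mathrm{va}}\dotsb\le_{\mathrm{va}}X$ and a finite-index computation give $[H]=\big[\prod_{i}\mathrm{rist}_G(v_i)\big]$ (\cref{proposition: representation of subnormal}). Without these two inputs your map $\Phi$ is not even known to land in $\mathrm{Bool}(\partial T)$, and surjectivity of $\Phi$ onto classes is unproved; so the gap is genuine and central, not a routine verification.

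Two secondary flaws. First, your justification of the geometric lemma overclaims: the branch property does \emph{not} imply that $\mathrm{rist}_{\rho(G)}(v)$ acts level-transitively on $T_v$; this holds only at sufficiently deep levels, which is precisely the content of \cref{lemma: leemann rigid} (the lemma would be vacuous otherwise). The identity $\mathrm{Supp}(\mathrm{rist}_G(v))=C_v$ is nonetheless true and can be rescued cheaply: if $\mathrm{rist}_G(v)$ fixed some $\gamma\in C_v$, then for each $m$ the factor $\mathrm{rist}_G(x_m)\le \mathrm{rist}_G(v)$ at the vertex $x_m$ of $\gamma$ fixes $\gamma$, while all other factors of $\mathrm{Rist}_G(m)$ fix $\gamma$ trivially; hence $\mathrm{Rist}_G(m)\le \mathrm{st}_G(\gamma)$, so the $G$-orbit of $\gamma$ would be finite, contradicting level-transitivity. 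Second, your well-definedness argument treats $\sim$ as commensurability, but $H\sim K$ only requires $H\cap K$ to contain the commutator subgroup of a finite-index subgroup of each, so ``finite-index subgroups have equal support'' does not cover it: equivalent subgroups could a priori differ by virtually abelian pieces. The paper closes this in \cref{corollary: support is well-defined in the lattice} using that rigid stabilizers of a branch group are not virtually abelian, an input your argument also needs. The remaining checks in your first and third paragraphs (equivariance via conjugation of supports, mutual inverses, preservation of meets, joins, $0$, $1$ and complements on basal representatives) are correct and match the paper's concluding assembly.
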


\cref{Theorem: canonical isomorphism of boolean algebras} shows that for a branch group $G$ every $G$-action on $\partial T$ induced by a branch action $\rho:G\to\mathrm{Aut}~T$ is equivalent to the action of $G$ by conjugation on its structure lattice $\mathcal{L}(G)$. In particular the action of $G$ on the Stone space of its structure lattice is equivalent to the action of $G$ on $\partial T$. \cref{Theorem: canonical isomorphism of boolean algebras} also gives an explicit description of the structure lattice analogous to the one of the structure graph by Hardy in  \cite[Lemma 15.1.1]{Hardy}.

\subsection*{\textit{\textmd{Organization}}}  In \cref{section: Boolean algebras and Stone spaces} we introduce Boolean algebras and Stone spaces. The special case of the Boolean algebra of clopen subsets of the boundary of a spherically homogeneous rooted tree is treated in \cref{section: boundary of a rooted tree}. Finally, branch groups and their structure lattice are defined in \cref{section: the structure lattice of a branch group} and we conclude the section with a proof of \cref{Theorem: canonical isomorphism of boolean algebras}.

\subsection*{\textit{\textmd{Notation}}} We use the exponential notation for the right action of a group on a tree and on its boundary. We write $H\le_f G$ if $H$ is a finite-index subgroup of $G$.

\subsection*{Acknowledgements} The first author would like to thank Texas A\&M University for its warm hospitality while this work was being carried out.

\section{Boolean algebras and Stone spaces}
\label{section: Boolean algebras and Stone spaces}

In this section, we introduce the main concepts and results on Boolean algebras and Stone spaces which shall be needed later on in the paper. We follow mainly~\cite{Halmos}.

\subsection{Boolean algebras}

Here we introduce the main concepts needed in order to define Boolean algebras. 

\begin{definition}[Poset]
    Let $A$ be a set and $\le$ a relation satisfying the following properties:
    \begin{enumerate}[\normalfont(i)]
        \item \textit{reflexive}: for any $a\in A$ we have $a\le a$;
        \item \textit{transitive}: for any $a,b,c\in A$ if $a\le b$ and $b\le c$ then $a\le c$;
        \item \textit{anti-symmetric}: for any $a,b\in A$ if $a\le b$ and $b\le a$ then $a=b$.
    \end{enumerate}
    Then the pair $(A,\le)$ is said to be a \textit{partially-ordered set} or \textit{poset}.
\end{definition}


\begin{definition}[Meet and join]
    Let $(A,\le)$ be a poset. We define  the \textit{meet} of $F\subseteq A$ as the unique element $\bigwedge F$ (if it exists) such that
\begin{enumerate}[\normalfont(i)]
    \item for any $f\in F$ we have $\bigwedge F\le f$;
    \item for any $a\in A$ if $a\le f$ for all $f\in F$ then $a \le \bigwedge F$.
\end{enumerate}
 
Similarly, we define the \textit{join} of $F$ as the unique element $\bigvee F$ (if it exists) such that
\begin{enumerate}[\normalfont(i)]
    \item for any $f\in F$ we have $f\le \bigvee F$;
    \item for any $a\in A$ if $f\le a$ for all $f\in F$ then $\bigvee F\le a$.
\end{enumerate}
\end{definition}

For single elements $a,b\in A$ we shall denote their meet and join by $a\wedge b$ and $a\vee b$ respectively.

\begin{definition}[Distributive lattice]
Let $(A,\le)$ be a poset. If for any finite subset $F\le A$ both the meet and the join of $F$ exist in $A$ then we say that $(A,\le)$ is a \textit{lattice}. We say a lattice $(A,\le)$ is \textit{distributive} if for any $a,b,c\in A$ we have
$$a\wedge (b\vee c)=(a\wedge b)\vee (a\wedge c).$$
\end{definition}

The simplest example of a distributive lattice is the power set of a finite set ordered by inclusion. Here the meet and the join correspond to the intersection and the union of subsets respectively. In particular, for a singleton $\{a\}$, its power set has simply two elements, namely $\emptyset$ and $\{a\}$. We shall write $0:=\emptyset $ and $1:=\{a\}$ and denote by $\mathbf{2}$ the distributive lattice $(\{0,1\},\le )$, where $0\le 1$. Note that
$$0\vee 0=0\wedge 0=0\wedge 1=1\wedge 0=0, \quad 1\vee 1=1\wedge 1=1\vee 0=0\vee 1=1.$$
More generally in a distributive lattice $(A,\le)$, we define the distinguished elements $0,1\in A$ as the unique elements (if they exist) such that
$$a\wedge 0=0, \quad a\wedge 1=a, \quad a\vee 0=a,\quad \text{and}\quad a\vee 1=1$$
for every $a\in A$. Note that for the power set of a finite set $S$ we have $0=\emptyset$ and $1=S$.

\begin{definition}[Complement]
Let $(A,\le)$ be a distributive lattice admitting the distinguished elements 0 and 1. Then for $a\in A$ we define its \textit{complement} (if it exists) as the unique element $\neg a$ satisfying the following:
\begin{enumerate}[\normalfont(i)]
    \item $a\wedge \neg a=0$;
    \item $a\vee \neg a=1$.
\end{enumerate}
\end{definition}

Note that by definition $\neg 0=1$ and $\neg 1=0$ and recall that for every $a,b\in A$ we have de Morgan's laws: $\neg(a\vee b)=\neg a\wedge \neg b$ and $\neg(a\wedge b)=\neg a\vee \neg b$.

\begin{definition}[Boolean algebra]
    Let $(A,\le)$ be a distributive lattice admitting the distinguished elements 0 and 1. If every element $a\in A$ admits a complement we say that $(A,\le)$ is a \textit{Boolean algebra}.
\end{definition}

The lattice $\mathbf{2}$, and more generally the power set of a finite set, are examples of Boolean algebras. Another important example of Boolean algebras comes from topology. Given a topological space $X$, the set of clopen subsets of $X$ forms a Boolean algebra with meet and join given by intersection and union respectively and the distinguished elements $0=\emptyset$ and $1=X$.

\subsection{Homomorphisms of Boolean algebras}

In order to link Boolean algebras to Stone spaces we need the concept of homomorphisms of Boolean algebras:

\begin{definition}[Homomorphism of Boolean algebras]
Let $A$ and $B$ be two Boolean algebras. We say a map $f:A\to B$ is a \textit{homomorphism of Boolean algebras} if it preserves meets, joins, and the distinguished elements $0,1$ respectively. In other words
$$f(a\wedge b)=f(a)\wedge f(b),\quad f(a\vee b)=f(a)\vee f(b),\quad f(0)=0,\quad \text{ and }\quad f(1)=1$$
for every $a,b\in A$.
\end{definition}

A direct consequence of the above definition is that a homomorphism $f:A\to B$ preserves complements, i.e. $f(\neg a)=\neg f(a)$ for every $a\in A$.

\begin{definition}[Ideals and maximal ideals]
    Let $A$ be a Boolean algebra. A non-empty subset $I\subseteq A$ is called an \textit{ideal} if it satisfies the following properties:
    \begin{enumerate}[\normalfont(i)]
        \item for all $a,b\in I$, we have $a\vee b\in I$;
        \item for all $a\in A$ and $b\in I$, we have $a\wedge b\in I$.
    \end{enumerate}

    Furthermore, a proper ideal $I\subset A$ is said to be \textit{maximal} if for any proper ideal $J\subset A$ the inclusion $I\subseteq J$ implies $I=J$.
\end{definition}

A first example of an ideal in a Boolean algebra $A$ is the kernel of any Boolean algebra homomorphism $f:A\to B$. Indeed, every ideal in $A$ arises as the kernel of a Boolean algebra homomorphism $f:A\to B$ for some $B$; see \cite[Chapter 18]{Halmos}.

The following lemma \cite[Chapter 20, Lemma 1]{Halmos} characterizes when an ideal is maximal.

\begin{lemma}[{see \cite[Chapter 20, Lemma 1]{Halmos}}]
\label{lemma: maximal ideals contain either a or its complement}
    Let $A$ be a Boolean algebra and $I\subseteq A$ an ideal. Then $I$ is maximal if and only if for every $a\in A$ either $a\in I$ or $\neg a\in I$ but not both.
\end{lemma}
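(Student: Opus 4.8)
The plan is to prove the two implications separately, preceded by the elementary observation that every ideal contains $0$: since an ideal $I$ is nonempty, picking any $b\in I$ gives $0=0\wedge b\in I$ by property (ii). From this the ``not both'' clause becomes immediate once $I$ is known to be proper, for if both $a\in I$ and $\neg a\in I$ then $a\vee\neg a=1\in I$ by property (i), and then $c=c\wedge 1\in I$ for every $c\in A$ by property (ii), forcing $I=A$. In the forward direction, where $I$ is maximal and hence proper by definition, the ``not both'' clause therefore holds automatically, and it remains only to produce, for each $a\in A$, at least one of $a$ or $\neg a$ in $I$.

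For that forward direction I would argue by contraposition: assuming some $a\in A$ satisfies both $a\notin I$ and $\neg a\notin I$, I would construct a proper ideal strictly containing $I$, contradicting maximality. The natural candidate is the ideal generated by $I\cup\{a\}$, which I would describe explicitly as $J=\{b\in A: b\le i\vee a \text{ for some } i\in I\}$. Verifying that $J$ is an ideal containing $I$ and $a$ is routine: closure under join uses $i_1\vee i_2\in I$, and the down-set property (ii) follows since $c\wedge b\le b\le i\vee a$. As $a\in J\setminus I$, the inclusion $I\subseteq J$ is strict, so maximality forces $J=A$ and in particular $1\in J$. Unwinding the definition yields $i\vee a=1$ for some $i\in I$, whence by distributivity and $\neg a\wedge a=0$ we get $\neg a=\neg a\wedge(i\vee a)=\neg a\wedge i\le i$, so that $\neg a\in I$, contradicting $\neg a\notin I$.

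For the reverse direction I assume that for every $a$ exactly one of $a,\neg a$ lies in $I$. Applying this to $a=1$ together with $0=\neg 1\in I$ shows $1\notin I$, so $I$ is proper. To check maximality, I take any proper ideal $J$ with $I\subseteq J$ and any $b\in J$; were $b\notin I$, then $\neg b\in I\subseteq J$, so $b\vee\neg b=1\in J$, forcing $J=A$ against properness. Hence $b\in I$, giving $J\subseteq I$ and therefore $J=I$.

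The only genuinely technical point is confirming that the set $J$ in the forward direction really is the ideal generated by $I\cup\{a\}$ and satisfies the ideal axioms, so I expect the main obstacle to be pinning down the correct explicit form of this generated ideal; once that is in place, everything reduces to the distributive and complement laws already recorded before the statement.
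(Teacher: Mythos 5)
Your proof is correct; the paper itself does not prove this lemma but merely cites Givant--Halmos, and your argument via the explicitly described ideal $J=\{b\in A : b\le i\vee a \text{ for some } i\in I\}$ generated by $I\cup\{a\}$ is precisely the standard proof in that cited source. All the needed verifications are in place: that $J$ satisfies the paper's two ideal axioms, that $J=A$ forces $i\vee a=1$ and hence $\neg a=\neg a\wedge i\in I$ by distributivity, and, in the converse, that $0\in I$ plus the ``not both'' clause yields properness of $I$ before the maximality check.
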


We can use \cref{lemma: maximal ideals contain either a or its complement} to characterize when a map $f:A\to\mathbf{2}$ is a homomorphism of Boolean algebras.

\begin{lemma}
\label{lemma: ideal sufficient}
    Let $A$ be a Boolean algebra and $f:A\to \mathbf{2}$ a map. Then $f$ is an homomorphism of Boolean algebras if and only if the set $\{a\in A \mid f(a)=0\}$ is a maximal ideal in $A$.
\end{lemma}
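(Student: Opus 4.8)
The plan is to set $I := \{a \in A \mid f(a) = 0\} = f^{-1}(0)$ and prove the two implications separately, using \cref{lemma: maximal ideals contain either a or its complement} as the bridge between maximality and the behaviour of complements under $f$.

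For the forward direction, assume $f$ is a homomorphism. First I would check that $I$ is an ideal: it is nonempty since $f(0)=0$ gives $0 \in I$; it is closed under joins because $f(a \vee b) = f(a) \vee f(b) = 0$ whenever $a,b \in I$; and it absorbs meets because $f(a \wedge b) = f(a) \wedge f(b) = f(a) \wedge 0 = 0$ for $a \in A$ and $b \in I$. It is proper since $f(1) = 1 \neq 0$ forces $1 \notin I$. To obtain maximality I would invoke \cref{lemma: maximal ideals contain either a or its complement}: as $f$ preserves complements, for each $a \in A$ we have $f(\neg a) = \neg f(a)$, so exactly one of $f(a)$ and $f(\neg a)$ equals $0$, i.e. exactly one of $a$ and $\neg a$ lies in $I$; hence $I$ is maximal.

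For the converse, assume $I$ is a maximal ideal. I would first pin down the distinguished elements: $0 = 0 \wedge b \in I$ for any $b \in I$ (using that $I$ is nonempty and absorbs meets), so $f(0)=0$; and since $\neg 1 = 0 \in I$, \cref{lemma: maximal ideals contain either a or its complement} forbids $1 \in I$, so $f(1)=1$. The key observation is that, because $f$ takes values in $\mathbf{2}$, we have $f(a)=0 \iff a \in I$ and $f(a)=1 \iff a \notin I$, while the lemma translates the condition $a \notin I$ into $\neg a \in I$; this already yields $f(\neg a) = \neg f(a)$.

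Then I would verify preservation of meets by cases. If one argument lies in $I$, say $a \in I$, then $a \wedge b \in I$ and both sides of $f(a \wedge b) = f(a) \wedge f(b)$ vanish. If neither $a$ nor $b$ lies in $I$, then $\neg a, \neg b \in I$, so $\neg a \vee \neg b = \neg(a \wedge b) \in I$, and by the lemma this forces $a \wedge b \notin I$, giving $f(a \wedge b) = 1 = f(a) \wedge f(b)$. Finally I would deduce the join identity either by a symmetric case analysis (using absorption $a = a \wedge (a \vee b)$ to show $a \vee b \notin I$ when $a \notin I$) or, more cleanly, from de Morgan's laws together with the meet and complement identities already established. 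The main obstacle is precisely this backward ``both arguments outside $I$'' case: one cannot argue directly that $a \wedge b \notin I$, and the only available leverage is to pass to complements and apply maximality through \cref{lemma: maximal ideals contain either a or its complement}.
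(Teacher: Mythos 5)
Your proposal is correct and takes essentially the same route as the paper's proof: both directions pivot on \cref{lemma: maximal ideals contain either a or its complement}, and your handling of the key case $a,b\notin I$ for meets (passing to $\neg a\vee\neg b=\neg(a\wedge b)\in I$ and concluding $a\wedge b\notin I$) is exactly the paper's argument. The only cosmetic differences are that you verify the ideal axioms directly where the paper cites the fact that kernels are ideals, and that you derive the join identity via de Morgan or absorption where the paper checks it directly through $\neg(a\vee b)=\neg a\wedge\neg b\in I$.
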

\begin{proof}
    First note that if $f$ is a homomorphism of boolean algebras then the set $I:=\{a\in A \mid f(a)=0\}$ is an ideal of $A$ as it is the kernel of the homomorphism~$f$. Furthermore it is maximal by \cref{lemma: maximal ideals contain either a or its complement} as for every $a\in A$ either $f(a)=0$ or $f(\neg a)=\neg f(a)=\neg 1=0$ but not both $f(a)=f(\neg a)=0$, as otherwise $f(1)=f(a\vee \neg a)=f(a)\vee f(\neg a)=0$. Now let us assume that the set $I$ is a maximal ideal of~$A$ and let us prove that $f$ is then a homomorphism. We only need to show that if $a,b\notin I$ then $a\wedge b\notin I$ and that for every $a\in A$ if $b\notin I$ then $a\vee b\notin I$, as the other conditions follow directly from the axioms on the definition of an ideal and $I$ being maximal. If $a,b\notin I$ then $\neg (a\wedge b)=\neg a\vee \neg b\in I$ by \cref{lemma: maximal ideals contain either a or its complement} and thus $a\wedge b\notin I$ again by \cref{lemma: maximal ideals contain either a or its complement}. Lastly if $a\in A$ and $b\notin I$ then $\neg (a\vee b)=\neg a \wedge \neg b\in I$ by \cref{lemma: maximal ideals contain either a or its complement} and hence $a\vee b\notin I$ also by \cref{lemma: maximal ideals contain either a or its complement}.
\end{proof}

\subsection{Stone spaces}

We conclude the section by defining Stone spaces and stating some of their useful properties.

\begin{definition}[Stone space]
Let $X$ be a topological space. We say that $X$ is a \textit{Stone space} if it is Hausdorff, compact and totally disconnected. 
\end{definition}

A Stone space is also widely known as a profinite space; see \cite{RibesZalesskii}. The following lemma tells us how the clopen subsets of a Stone space look like:

\begin{lemma}
\label{lemma: disjoint clopens}
    Let $X$ be a Stone space and let $\mathcal{U}$ be a basis of opens for the topology in $X$. Then any clopen subset $C\subseteq X$ is a finite union of opens in $\mathcal{U}$. Furthermore, if $\mathcal{U}$ consists of clopen subsets of $X$ such that for every pair $U_1,U_2\in \mathcal{U}$ either $U_1\subseteq U_2$, $U_2\subseteq U_1$ or $U_1\cap U_2=\emptyset$, then $C$ may be represented as the finite disjoint union of clopen subsets in $\mathcal{U}$.
\end{lemma}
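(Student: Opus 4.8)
The plan is to treat the two assertions separately: the first follows from compactness, and the second from a maximal-element argument exploiting the nested-or-disjoint structure of $\mathcal{U}$.

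For the first assertion, I would begin with the observation that $C$ is open and $\mathcal{U}$ is a basis, so $C$ can be written as a (possibly infinite) union of members of $\mathcal{U}$. Since $X$ is a Stone space it is compact, and $C$, being closed, is a closed subset of a compact space and hence itself compact. The chosen family of basis elements is then an open cover of the compact set $C$, so it admits a finite subcover $U_1,\dots,U_n\in\mathcal{U}$ with $C=U_1\cup\dots\cup U_n$. This already gives the first claim, and notably it uses only that $C$ is clopen (open, to invoke the basis; closed, to invoke compactness).

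For the second assertion I would start from such a finite representation $C=U_1\cup\dots\cup U_n$, now with each $U_i$ clopen and $\mathcal{U}$ satisfying the hypothesis that any two of its members are nested or disjoint. Among $U_1,\dots,U_n$ I would select those that are maximal with respect to inclusion. The key point is that two distinct maximal elements are necessarily disjoint: if they met, the nested-or-disjoint hypothesis would force one to contain the other, contradicting maximality. Furthermore, by finiteness every $U_i$ lies below some maximal element, since a strictly increasing chain of the $U_i$ starting at $U_i$ must terminate. Consequently the union of the maximal elements already equals $C$, and these maximal elements form a pairwise disjoint collection of clopen members of $\mathcal{U}$, yielding the desired disjoint representation.

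I do not expect a serious obstacle, as the statement is essentially a compactness argument followed by elementary combinatorics. The only points requiring care are the bookkeeping in the maximal-element step: verifying via finiteness that each $U_i$ sits below a maximal element, and verifying via the laminarity hypothesis that distinct maximal elements are genuinely disjoint. Both verifications are short, so the hard part is merely stating them cleanly rather than any genuine mathematical difficulty.
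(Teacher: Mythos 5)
Your proof is correct and matches the paper's argument: the same compactness step (closed subset of a compact space, finite subcover from the basis) followed by the same disjointification, since the paper's ``remove unnecessary terms'' using the laminarity of $\mathcal{U}$ is precisely your retention of the maximal elements. Your write-up merely makes the maximal-element bookkeeping more explicit than the paper does.
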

\begin{proof}
    Since $C$ is open it is an arbitrary union of open subsets in $\mathcal{U}$, i.e. $C=\bigcup_{i\in I}U_i$. Since $C$ is closed in $X$ and $X$ is a Stone space $C$ is compact, thus there exists a finite refinement $C=\bigcup_{i=1}^n U_i$. To obtain a disjoint union, we remove unnecessary terms from the finite decomposition $C=\bigcup_{i=1}^n U_i$ using the fact that for $1\le j<k\le n$, the intersection $U_j\cap U_k$ is clopen and equal to either $U_j,U_k$ or~$\emptyset$.
\end{proof}

An important example of a Stone space is the Stone space associated to a Boolean algebra. Given a Boolean algebra $A$ its \textit{Stone space} is simply $\mathrm{Hom}(A)$, i.e. the space consisting of all Boolean algebra homomorphisms $A\to \mathbf{2}$, which is a Stone space for the topology of pointwise convergence of nets. This topology is no more than the Tychonoff topology in $\mathrm{Hom}(A)$ when seen as the cartesian product $\mathbf{2}^A$.

\begin{lemma}
\label{lemma: non zero map}
    Let $X$ be a topological space such that $\mathrm{Bool}(X)\ne \mathbf{2}$. Then for every homomorphism of Boolean algebras $f:\mathrm{Bool}(X)\to \mathbf{2}$ there exists some non-empty proper clopen $C$ such that $f(C)=1$.
\end{lemma}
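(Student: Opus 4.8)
The plan is to exploit the fact that any Boolean algebra homomorphism $f$ is forced to send the top element $1 = X$ to $1$, and then to transfer this nontrivial value onto a \emph{proper} clopen by using the hypothesis $\mathrm{Bool}(X) \neq \mathbf{2}$. The hypothesis is precisely what supplies a single nontrivial witness in $\mathrm{Bool}(X)$, and the homomorphism property will do the rest.

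First I would observe that $\mathrm{Bool}(X) \neq \mathbf{2}$ produces a clopen subset $D \subseteq X$ that equals neither $\emptyset$ nor $X$; equivalently, $D$ is a nonempty proper clopen. Consequently its complement $\neg D = X \setminus D$ is \emph{also} a nonempty proper clopen: $D$ being proper forces $\neg D \neq \emptyset$, and $D$ being nonempty forces $\neg D \neq X$. Thus both $D$ and $\neg D$ are legitimate candidates for the clopen $C$ sought in the statement, and it only remains to check that $f$ realizes the value $1$ on at least one of them.

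Next, since $f$ preserves joins and the distinguished element $1$, from $D \vee \neg D = 1$ I obtain $f(D) \vee f(\neg D) = f(1) = 1$, which in the lattice $\mathbf{2}$ forces at least one of $f(D)$ and $f(\neg D)$ to equal $1$. Setting $C := D$ in the first case and $C := \neg D$ in the second, the previous paragraph guarantees that $C$ is a nonempty proper clopen with $f(C) = 1$, as required. There is essentially no obstacle to overcome; the only point meriting a moment of care is that the desired $C$ must be proper, which is why one cannot simply take $C = X$ despite $f(X) = 1$ always holding. The simultaneous nonemptiness and properness of both $D$ and $\neg D$, inherited from the single witness $D$, is exactly what makes the argument succeed no matter which of the two values $f$ happens to realize.
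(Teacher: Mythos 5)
Your proof is correct and follows essentially the same route as the paper: extract a nonempty proper clopen $D$ from the hypothesis $\mathrm{Bool}(X)\ne\mathbf{2}$, note that $\neg D$ is likewise nonempty and proper, and use $f(D)\vee f(\neg D)=f(X)=1$ to conclude that $f$ takes the value $1$ on one of them. Your additional remark about why $C=X$ itself cannot serve is a fine clarification but does not change the argument.
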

\begin{proof}
    Since $\mathrm{Bool}(X)\ne \mathbf{2}$ there must exist a proper clopen subset $\emptyset \ne U\subset X$. Thus, its complement $\neg U$ is also a non-empty proper clopen subset of $X$. Then we have $$1=f(X)=f(U\vee \neg U)=f(U)\vee f(\neg U),$$
    which implies that either $f(U)=1$ or $f(\neg U)=1$.
\end{proof}

We conclude the section with a folklore result on Cantor sets, which seems to not be recorded anywhere:

\begin{proposition}
    Let $X$ be a Cantor set. Then the automorphism group of the Boolean algebra  $\mathrm{Bool}(X)$ is isomorphic to the group of homeomorphisms of $X$.
\end{proposition}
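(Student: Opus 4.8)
The plan is to exhibit the isomorphism via Stone duality. The key idea is that homeomorphisms of $X$ and automorphisms of the Boolean algebra $\mathrm{Bool}(X)$ are two sides of the same coin, linked by the fact that a Cantor set is canonically recovered from its clopen algebra as a Stone space.

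Let me think about how I'd actually build this map and prove it's an isomorphism.

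A homeomorphism $h: X \to X$ induces a Boolean algebra automorphism by $C \mapsto h^{-1}(C)$ (or $h(C)$ — need to pick the right variance so it's a homomorphism, not an anti-homomorphism; since $h$ is a bijection both the image and preimage maps preserve intersections and unions, so either works, but I should fix one convention). This gives a map $\Psi$ from $\mathrm{Homeo}(X)$ to $\mathrm{Aut}(\mathrm{Bool}(X))$, and it's routine to check it's a group homomorphism.

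The main content is that $\Psi$ is a bijection. For this I'd invoke Stone duality: $X$ is a Cantor set, hence a Stone space, and the canonical evaluation map $X \to \mathrm{Hom}(\mathrm{Bool}(X))$ sending $x$ to the homomorphism $C \mapsto [x \in C]$ is a homeomorphism. So every point of $X$ is identified with a maximal ideal (equivalently, a homomorphism to $\mathbf{2}$) of $\mathrm{Bool}(X)$. Given any automorphism $\alpha$ of $\mathrm{Bool}(X)$, precomposition with $\alpha$ permutes the homomorphisms $\mathrm{Hom}(\mathrm{Bool}(X)) \to \mathbf{2}$, hence under the identification induces a bijection of $X$; continuity in both directions follows because $\alpha$ and $\alpha^{-1}$ preserve the clopen sets that generate the topology. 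This recovers a homeomorphism whose induced automorphism is $\alpha$, giving surjectivity of $\Psi$; injectivity follows because if $\Psi(h)$ is the identity then $h$ and the identity have the same preimages on every clopen, and clopens separate points in a Stone space.

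The step I expect to be the main obstacle — or at least the one needing the most care — is proving surjectivity cleanly, i.e. that \emph{every} Boolean automorphism comes from a genuine homeomorphism. The crux is the canonical homeomorphism $X \cong \mathrm{Hom}(\mathrm{Bool}(X))$; once this identification is in hand, an automorphism of the algebra acts on $\mathrm{Hom}(\mathrm{Bool}(X))$ by precomposition, and the fact that $X$ is a Cantor set guarantees that the resulting point map is a homeomorphism rather than merely a bijection. I would want to be careful that the variance is consistent throughout so that $\Psi$ is a genuine group isomorphism and not just an anti-isomorphism; with the Cantor set being self-dual under group inversion this is harmless, but I would state the convention explicitly. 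A lighter alternative, avoiding the evaluation map, is to use \cref{lemma: disjoint clopens}: a clopen partition of $X$ corresponds to a finite partition of unity in $\mathrm{Bool}(X)$, an automorphism permutes atoms of each such partition, and refining over all partitions reconstructs the point map by intersecting nested clopen neighborhoods — but the Stone-duality route is cleaner and I would present that.
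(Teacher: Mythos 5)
Your proof is correct, but it takes a genuinely different route from the paper. You work on the dual side: you invoke the evaluation homeomorphism $X\cong \mathrm{Hom}(\mathrm{Bool}(X))$ and realize a Boolean automorphism $\alpha$ as a homeomorphism via precomposition $\varphi\mapsto \varphi\circ\alpha$, from which surjectivity of $\Psi$ is essentially formal and injectivity follows since clopens separate points. The paper instead argues directly on $X$: it uses only that a Cantor set is totally separated, so each point satisfies $x=\bigcap C$ over the clopens $C\ni x$, and it defines the point map of an automorphism $f$ explicitly by $\widetilde f(x):=\bigcap f(C)$, then checks that $f\mapsto f'$ (image map on clopens) and $f\mapsto\widetilde f$ are mutually inverse group homomorphisms. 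Amusingly, the ``lighter alternative'' you sketch and discard at the end --- reconstructing the point map by intersecting nested clopen neighborhoods --- is precisely the paper's actual argument. The trade-offs: your duality route outsources the hard work to the standard Stone representation (which the paper only proves for $\partial T$, in the theorem of \cref{section: boundary of a rooted tree}; for a general Cantor set you would either cite it or transport it along a homeomorphism $X\cong\partial T$ for the binary tree), and it makes clear the statement holds for every Stone space, the Cantor hypothesis being inessential; the paper's route is self-contained and avoids the $\mathrm{Hom}$-space entirely, and its covariant convention $f'(C)=f(C)$ sidesteps the contravariance of precomposition that forces you to patch variance by inversion. On that point, your remark that ``the Cantor set is self-dual under group inversion'' is garbled --- what you mean is that inversion is an anti-automorphism of any group, so an anti-isomorphism $\mathrm{Homeo}(X)\to\mathrm{Aut}(\mathrm{Bool}(X))$ composed with inversion is an isomorphism --- but the substance is right and the issue is, as you say, harmless once a convention is fixed.
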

\begin{proof}
    Let $f\in \mathrm{Homeo}~X$. Then for any clopen subset $C\subseteq X$ the image $f(C)\subseteq X$ is again clopen. Since $f(X)=X$ and $f(\emptyset)=\emptyset$ and for any pair of clopens $C_1\subseteq C_2\subseteq X$ we have $f(C_1)\subseteq f(C_2)\subseteq X$, the homeomorphism $f$ induces an automorphism $f'$ of the Boolean algebra $\mathrm{Bool}(X)$. Now note that a Cantor set~$X$ is totally separated, i.e. any point $x\in X$ is given by the intersection $x=\bigcap C$, where $C$ ranges over all the of clopen subsets containing $x$. Thus, an automorphism $f\in \mathrm{Aut}(\mathrm{Bool}(X))$ induces a map $\widetilde{f}:X\to X$ given by
    $$\widetilde{f}(x):=\bigcap f(C).$$
    The map $\widetilde{f}$ is bijective and continuous (with continuous inverse) by definition, thus $\widetilde{f}\in \mathrm{Homeo}~X$. The map $F':\mathrm{Homeo}~X\to \mathrm{Aut}(\mathrm{Bool}(X))$ given by $f\mapsto f'$ is a group homomorphism and similarly the map $\widetilde{F}: \mathrm{Aut}(\mathrm{Bool}(X))\to \mathrm{Homeo}~X$ given by $f
    \mapsto \widetilde{f}$ is a group homomorphism. Finally $\widetilde{F}\circ F'=\mathrm{id}_{\mathrm{Homeo~}X}$ and $F'\circ \widetilde{F}=\mathrm{id}_{\mathrm{Aut(Bool}(X))}$ as
    $$\widetilde{(f')}(x)=\bigcap f(C)=f\big(\bigcap C\big)=f(x)$$
    for any $x\in X$ and $f\in \mathrm{Homeo}~X$, and
    $$(\widetilde{g})'(C)=g(C)$$
    for any clopen subset $C\subseteq X$ and $g\in \mathrm{Aut}(\mathrm{Bool}(X))$. Hence $\mathrm{Homeo}~X$ is isomorphic to $\mathrm{Aut}(\mathrm{Bool}(X))$.
\end{proof}

\section{The boundary of a spherically homogeneous rooted tree}
\label{section: boundary of a rooted tree}

In this section we introduce spherically homogeneous rooted trees and their boundaries and study the Boolean algebra of clopens of the latter.

\subsection{Spherically homogeneous rooted trees and their boundaries}

A \textit{spherically homogeneous rooted tree} $T$ is a rooted tree such that every vertex in $T$ at the same distance from the root has the same degree. For every $n\ge 1$, the vertices at distance $n$ from the root constitute the $n$th \textit{level} of $T$, denoted $\mathcal{L}_n$. For a vertex $v\in T$, the subtree rooted at $v$, denoted $T_v$, consists of the vertices in $T$ below $v$.

The tree $T$ can be identified with the set of finite words $\prod_{n\ge 1} X_n$, where the cardinality of each set $X_n$ coincides with the degree of the vertices at level $n-1$ for $n\ge 1$. Note that under this identification the root is simply the empty word. Also this identification induces a graded lexicographical order in $T$, by fixing a lexicographical order in $X_n$ for every $n\ge 1$.

Two vertices $u,v\in T$ are said to be \textit{incomparable} if none is a descendant of the other.

Let $T$ be a spherically homogeneous rooted tree. We define its boundary $\partial T$ as the set of ends in $T$. In other words, an element of the boundary $\gamma\in \partial T$ is simply an infinite path in $T$. For each vertex $v\in T$ we define the \textit{cone set} $C_v\subseteq \partial T$ as the subset of all paths in $\partial T$ passing through $v$.  We write $v\in \gamma$ if the path $\gamma$ passes through $v$. Note that
$$\gamma=\bigcap_{v\in \gamma} C_v.$$

The boundary $\partial T$ is a Stone space with respect to the compact topology generated by the cone sets $\{C_v\}_{v\in T}$. Cone sets are clopen sets in this topology and they satisfy all the assumptions in \cref{lemma: disjoint clopens}. Thus any clopen in $\partial T$ may be represented as a finite disjoint union of cone sets. For a vertex $v\in T$ and a clopen $C\subseteq \partial T$ we write $v\in C$ if $C_v\subseteq C$.

\subsection{The Boolean algebra of clopen sets of $\partial T$}

Let $\mathrm{Bool}(\partial T)$ be the Boolean algebra of clopen subsets of $\partial T$. Let $\gamma\in \partial T$ and let us define the map $\varphi_\gamma:\mathrm{Bool}(\partial T)\to \mathbf{2}$ via
    \begin{equation*}
    \varphi_\gamma(A):=
        \begin{cases}
            1,&\text{if } \gamma\in A;\\
            0,&\text{if } \gamma\notin A.
        \end{cases}
    \end{equation*}

\begin{lemma}
\label{lemma: definition of varphi gamma}
    For any $\gamma\in \partial T$ the map $\varphi_\gamma:\mathrm{Bool}(\partial T)\to \mathbf{2}$ is a well-defined homomorphism of Boolean algebras.
\end{lemma}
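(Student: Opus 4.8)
The plan is to verify directly that $\varphi_\gamma$ satisfies the four defining conditions of a Boolean algebra homomorphism, since the map is manifestly well-defined (each clopen $A$ either contains the fixed end $\gamma$ or does not, so $\varphi_\gamma(A)$ is unambiguously determined). The distinguished elements are immediate: since $\gamma\in\partial T=1$ we have $\varphi_\gamma(1)=1$, and since $\gamma\notin\emptyset=0$ we have $\varphi_\gamma(0)=0$.

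For the join, I would argue by cases on membership. Given clopens $A,B\in\mathrm{Bool}(\partial T)$, the end $\gamma$ lies in $A\vee B=A\cup B$ if and only if it lies in at least one of $A$ or $B$; translating through the definition of $\varphi_\gamma$, this says $\varphi_\gamma(A\cup B)=1$ exactly when $\varphi_\gamma(A)=1$ or $\varphi_\gamma(B)=1$, which is precisely the behaviour of $\vee$ in $\mathbf{2}$. Hence $\varphi_\gamma(A\vee B)=\varphi_\gamma(A)\vee\varphi_\gamma(B)$. Symmetrically, for the meet I would use that $\gamma\in A\wedge B=A\cap B$ if and only if $\gamma\in A$ and $\gamma\in B$, giving $\varphi_\gamma(A\wedge B)=\varphi_\gamma(A)\wedge\varphi_\gamma(B)$.

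There is no serious obstacle here: the entire content is that set-theoretic membership of a single point commutes with the lattice operations of union and intersection, which are the operations $\vee,\wedge$ in $\mathrm{Bool}(\partial T)$. The only point worth a remark is that $\varphi_\gamma$ is defined on the whole Boolean algebra and not merely on cone sets, so one should not invoke the cone-set structure at all — the verification is purely about the point $\gamma$ and arbitrary clopens.

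As an alternative, slightly more streamlined route, I could instead appeal to \cref{lemma: ideal sufficient} and show that $I:=\{A\in\mathrm{Bool}(\partial T)\mid\gamma\notin A\}$ is a maximal ideal: it is closed under $\vee$ and under meeting with arbitrary elements (if $\gamma\notin B$ then $\gamma\notin A\cap B$ for any $A$), and for every clopen $A$ exactly one of $\gamma\in A$ or $\gamma\in\neg A$ holds, so by \cref{lemma: maximal ideals contain either a or its complement} the ideal is maximal. Either presentation works, but the direct case check is shorter and self-contained, so I would present that one.
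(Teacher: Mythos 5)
Your proof is correct, but your primary argument takes a genuinely different (and more elementary) route than the paper's. The paper does not verify the homomorphism axioms directly: it invokes \cref{lemma: ideal sufficient} and shows that $I:=\{A\in\mathrm{Bool}(\partial T)\mid \gamma\notin A\}$ is a maximal ideal --- closure under joins since $\gamma$ lies in neither term, closure under meets with arbitrary elements, non-emptiness via $\emptyset\in I$, and maximality via \cref{lemma: maximal ideals contain either a or its complement} because exactly one of $\gamma\in A$, $\gamma\in\neg A$ holds. That is precisely the ``alternative, slightly more streamlined route'' you sketch in your last paragraph, so your secondary argument coincides with the paper's proof and is complete as you state it. Your preferred argument --- the direct check that membership of the single point $\gamma$ commutes with union and intersection and sends $\emptyset\mapsto 0$ and $\partial T\mapsto 1$ --- is equally valid, shorter, and self-contained, requiring neither of the two ideal lemmas; what the paper's route buys instead is coherence with the Stone-duality viewpoint running through the section, in which homomorphisms to $\mathbf{2}$ are systematically identified with maximal ideals, the same dictionary that is exploited again in \cref{lemma: definition of inverse map}. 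You are also right that no cone-set structure is needed here; cone sets only become essential in the converse direction, when recovering $\gamma$ from an abstract homomorphism.
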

\begin{proof}
    It is enough to prove that the set $I:=\{A\in \mathrm{Bool}(\partial T)\mid \gamma\notin A\}$ is a maximal ideal in $\mathrm{Bool}(\partial T)$ by \cref{lemma: ideal sufficient}. If $A,B\in I$, then $A\cup B\in I$ as $\gamma$ is neither contained in $A$ nor in $B$. Also if $A\in I$ and $B\in \mathrm{Bool}(\partial T)$ then $A\cap B\in I$ as $A$ does not contain $\gamma$. Lastly $\emptyset\in \mathrm{Bool}(\partial T)$ does not contain $\gamma$ so $I$ is non-empty. Thus the set $I$ is an ideal in $\mathrm{Bool}(\partial T)$. Now for any $A\in\mathrm{Bool}(\partial T)$ either $\gamma\in A$ or $\gamma\notin A$ but not both, so by \cref{lemma: maximal ideals contain either a or its complement} the ideal $I$ is maximal in $\mathrm{Bool}(\partial T)$.
\end{proof}

\begin{lemma}
\label{lemma: definition of inverse map}
    For any homomorphism $f:\mathrm{Bool}(\partial T)\to \mathbf{2}$ there exists a unique $\gamma\in \partial T$ such that $f=\varphi_\gamma$.
\end{lemma}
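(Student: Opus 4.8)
The goal is to establish the bijective correspondence between Boolean algebra homomorphisms $f:\mathrm{Bool}(\partial T)\to\mathbf{2}$ and points $\gamma\in\partial T$, showing each such $f$ equals $\varphi_\gamma$ for a unique boundary point. The plan is to produce the point $\gamma$ by building a nested descending sequence of cone sets on which $f$ takes the value $1$, one cone per level of the tree, whose intersection is a single end.

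First I would handle the degenerate case. If $\mathrm{Bool}(\partial T)=\mathbf{2}$, then $\partial T$ is a single point and the statement is trivial, so assume otherwise and invoke \cref{lemma: non zero map} to guarantee a nonempty proper clopen on which $f$ takes the value $1$. The main construction is inductive on levels. At level $n$, the finitely many cone sets $\{C_v : v\in\mathcal{L}_n\}$ partition $\partial T$ into disjoint clopens, so $\partial T=\bigvee_{v\in\mathcal{L}_n}C_v$ and applying $f$ gives $1=f(\partial T)=\bigvee_{v\in\mathcal{L}_n}f(C_v)$; since the $C_v$ are pairwise disjoint, $C_u\wedge C_w=\emptyset$ forces $f(C_u)\wedge f(C_w)=0$, so $f(C_u)=f(C_w)=1$ is impossible for distinct $u,w$. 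Hence there is exactly one vertex $v_n\in\mathcal{L}_n$ with $f(C_{v_n})=1$, and the compatibility $C_{v_{n+1}}\subseteq C_{v_n}$ (so that $v_{n+1}$ is a child of $v_n$) follows because $f(C_{v_{n+1}})=1$ together with $C_{v_{n+1}}\cap C_{v_n}=\emptyset$ would again contradict the disjointness computation. This yields a single infinite path, i.e.\ a unique end $\gamma:=\bigcap_n C_{v_n}\in\partial T$.

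It then remains to verify $f=\varphi_\gamma$ and that $\gamma$ is unique. For any clopen $A$, \cref{lemma: disjoint clopens} writes $A$ as a finite disjoint union of cone sets $A=\bigvee_{i}C_{w_i}$, and $f(A)=\bigvee_i f(C_{w_i})$. I would argue that $f(C_{w_i})=1$ holds if and only if $w_i$ lies on the path $\gamma$: if $w_i\in\gamma$ then $w_i=v_m$ for $m$ the level of $w_i$ and $f(C_{w_i})=1$ by construction, while if $w_i\notin\gamma$ then $C_{w_i}$ is disjoint from $C_{v_m}$ at that level (as $v_m\ne w_i$ are distinct vertices of $\mathcal{L}_m$), whence $f(C_{w_i})=0$. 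Therefore $f(A)=1$ exactly when some cone $C_{w_i}\subseteq A$ contains $\gamma$, which is precisely the condition $\gamma\in A$, giving $f=\varphi_\gamma$. Uniqueness is immediate: if $\varphi_{\gamma'}=\varphi_\gamma$ with $\gamma'\ne\gamma$, the two ends diverge at some level $m$, and the cone $C_{v_m}$ separating them is a clopen containing $\gamma$ but not $\gamma'$, so the two homomorphisms disagree on it.

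The main obstacle, and the only genuinely non-formal point, is establishing that $f$ selects exactly one cone at each level with the correct nesting; everything hinges on the disjointness identity $f(C_u)\wedge f(C_w)=0$ for incomparable vertices combined with $\bigvee_v f(C_v)=1$ across a level. Once this alternative-forced-choice mechanism is in place, the passage to a well-defined end and the identification $f=\varphi_\gamma$ are routine applications of \cref{lemma: disjoint clopens} and the definition of $\varphi_\gamma$.
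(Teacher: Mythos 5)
Your proof is correct, and it reaches the same endpoint as the paper's --- the end $\gamma$ is obtained as the intersection of the chain of cone sets on which $f$ takes the value $1$, and $f=\varphi_\gamma$ is then verified on an arbitrary clopen via its disjoint cone decomposition --- but your mechanism for the key existence step is genuinely different. The paper restricts $f$ to the family of cone sets and repeatedly applies \cref{lemma: non zero map} together with \cref{lemma: disjoint clopens} inside successive subtrees to show that the set $S=\{C_v\mid f(C_v)=1\}$ is infinite, and then uses $f(C\cap D)=1$ plus the nested-or-disjoint property of cones to conclude that $S$ is linearly ordered. You instead exploit the fact that the cones at each level form a finite partition of $\partial T$: from $f(\partial T)=1$ and $f(C_u)\wedge f(C_w)=f(\emptyset)=0$ for distinct $u,w\in\mathcal{L}_n$ you get \emph{exactly one} selected vertex $v_n$ per level, with the nesting $C_{v_{n+1}}\subseteq C_{v_n}$ forced by the same disjointness computation. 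This is arguably cleaner: it makes \cref{lemma: non zero map} unnecessary (your appeal to it, and the degenerate-case discussion, are in fact superfluous, since $f(\partial T)=f(1)=1$ is part of the definition of a homomorphism), it yields the linear ordering of the chosen cones for free rather than via the meet argument, and it replaces the paper's somewhat informal inductive restriction to subtrees with a precise level-by-level selection. Your verification also differs mildly: where the paper handles the case $\gamma\notin A$ by the complement trick $f(A)=\neg f(\neg A)$, you evaluate $f$ on every cone of the decomposition of $A$ directly; both are valid. The only implicit hypothesis in your argument --- that each level $\mathcal{L}_n$ is finite, so the level partition is a finite join --- is exactly the assumption underlying the paper's claim that $\partial T$ is compact, so nothing is lost.
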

\begin{proof}
    Let us restrict the homomorphism $f$ to the subgraph $\{C_v\}_{v\in T}\subset\mathrm{Bool}(\partial T)$, which is isomorphic as a graph to $T$. By \textcolor{teal}{Lemmata} \ref{lemma: disjoint clopens} and \ref{lemma: non zero map} there exists a vertex $v\in T$ distinct from the root such that $f(C_v)=1$. Again by \textcolor{teal}{Lemmata} \ref{lemma: disjoint clopens} and \ref{lemma: non zero map} there exists a vertex $v_2\in T_{v_1}\setminus\{v_1\}$ such that $f(C_{v_2})=1$. Applying this argument inductively shows that the set $S:=\{C_v\mid f(C_v)=1\}$ is infinite. Furthermore since $f$ is a homomorphism, for any $C,D\in S$ we have
    $$f(C\cap D)=1.$$
    However, two cone sets are either disjoint or contained one in the other. Thus since $C\cap D=\emptyset$ would imply $f(C\cap D)=0$, we must have either $C\subseteq D$ or $D\subseteq C$ for any pair $C,D\in S$. This implies that the countable set $S$ is completely linearly ordered. Therefore $\gamma:=\bigcap_{C\in S} C$ is a well-defined uniquely determined element in the boundary $\partial T$. Note that $C_v\in S$ if and only if $\gamma\in C_v$.
    
    Let us conclude by showing $f=\varphi_\gamma$. Let $A\in \mathrm{Bool}(\partial T)$. If $A$ contains $\gamma$, then $A$ must contain a cone set $C\in S$ by \cref{lemma: disjoint clopens} and $f(A)=f(A\cup C)=f(A)\vee f(C)=1$. On the other hand, if $A$ does not contain $\gamma$, then $\neg A$ must contain~$\gamma$ and we get $f(A)=\neg f(\neg A)=\neg 1=0$. Thus $f=\varphi_\gamma$.
\end{proof}

\begin{theorem}
    The map 
    \begin{align*}
        F:\partial T&\to \mathrm{Hom}(\mathrm{Bool}(\partial T))\\
        \gamma&\mapsto \varphi_\gamma
    \end{align*}
    is a homeomorphism.
\end{theorem}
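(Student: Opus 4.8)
The plan is to show that $F$ is a continuous bijection from a compact space to a Hausdorff space, from which the homeomorphism follows automatically. The bijectivity is essentially already in hand: \cref{lemma: definition of varphi gamma} guarantees that each $\varphi_\gamma$ is a genuine element of $\mathrm{Hom}(\mathrm{Bool}(\partial T))$, so $F$ is well-defined, and \cref{lemma: definition of inverse map} provides, for every homomorphism $f$, a \emph{unique} $\gamma$ with $f=\varphi_\gamma$, which is precisely the statement that $F$ is surjective and injective. So the only remaining analytic content is continuity.

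To address continuity, I would work directly with the topology on $\mathrm{Hom}(\mathrm{Bool}(\partial T))$, which is the topology of pointwise convergence (the Tychonoff topology inherited from $\mathbf{2}^{\mathrm{Bool}(\partial T)}$). A subbasis for this topology is given by the sets $\{f : f(A)=\epsilon\}$ for $A\in\mathrm{Bool}(\partial T)$ and $\epsilon\in\{0,1\}$. Therefore it suffices to check that the preimage under $F$ of each such subbasic set is open in $\partial T$. Concretely, $F^{-1}(\{f:f(A)=1\})=\{\gamma\in\partial T : \varphi_\gamma(A)=1\}=\{\gamma : \gamma\in A\}=A$, which is open since $A$ is clopen; likewise $F^{-1}(\{f:f(A)=0\})=\neg A$, also clopen. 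Hence $F$ is continuous.

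With continuity established, I would invoke the standard topological fact that a continuous bijection from a compact space to a Hausdorff space is a homeomorphism. Here $\partial T$ is compact (it is a Stone space, as noted in the preceding subsection) and $\mathrm{Hom}(\mathrm{Bool}(\partial T))$ is Hausdorff (being a Stone space, or more directly a subspace of the Hausdorff product $\mathbf{2}^{\mathrm{Bool}(\partial T)}$). This lets me conclude that $F^{-1}$ is automatically continuous, avoiding any need to describe the inverse map explicitly or to verify its continuity by hand.

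I do not anticipate a serious obstacle, since the two preceding lemmata already do the substantive work of identifying the points of the Stone space with boundary points. The only place demanding care is making sure the topologies are matched correctly: one must confirm that the pointwise-convergence topology on $\mathrm{Hom}(\mathrm{Bool}(\partial T))$ really is generated by the subbasic sets $\{f:f(A)=\epsilon\}$, and that the cone-set topology on $\partial T$ makes every clopen $A$ (equivalently, every finite disjoint union of cone sets, by \cref{lemma: disjoint clopens}) open. Both are immediate from the definitions recalled in the excerpt, so the compactness-plus-Hausdorff shortcut closes the argument cleanly.
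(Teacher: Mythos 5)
Your proposal is correct and takes essentially the same route as the paper: bijectivity comes from \cref{lemma: definition of varphi gamma} and \cref{lemma: definition of inverse map}, and continuity from the pointwise-convergence (Tychonoff) topology on $\mathrm{Hom}(\mathrm{Bool}(\partial T))$. The only difference is that you spell out what the paper's terse proof leaves implicit --- the subbasic preimage computation $F^{-1}(\{f: f(A)=1\})=A$ and the continuous-bijection-from-compact-to-Hausdorff argument for the inverse --- which is a tightening of the same argument rather than a new approach.
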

\begin{proof}
    By \textcolor{teal}{Lemmata} \ref{lemma: definition of varphi gamma} and \ref{lemma: definition of inverse map} the map $\gamma\mapsto \varphi_\gamma$ is 1-to-1 and invertible. Continuity follows from the definition for the topology of pointwise convergence of nets in $\mathrm{Hom}(\mathrm{Bool}(\partial T))$.
\end{proof}

\section{The structure lattice of a branch group}
\label{section: the structure lattice of a branch group}

In this section we introduce firstly the class of branch groups and secondly the structure lattice of a branch group. The remainder of the section is devoted to proving \cref{Theorem: canonical isomorphism of boolean algebras}.

\subsection{Branch groups}

Let $T$ be a spherically homogeneous rooted tree and $\mathrm{Aut}~T$ its group of graph automorphisms. Let us fix a subgroup $G\le \mathrm{Aut}~T$. We say $G$ is \textit{level-transitive} if its action on every level of $T$ is transitive. The group $G$ has a natural induced action on $\partial T$. We define the \textit{support} of $G$, denoted $\mathrm{Supp}(G)$, via
$$\mathrm{Supp}(G):=\{\gamma\in \partial T\mid \text{there exists }g\in G\text{ such that }\gamma^g\ne \gamma\}.$$

For a vertex $v\in T$, we write $\mathrm{st}_G(v)$ for the stabilizer of $v$ in $G$. We further define the \textit{rigid vertex stabilizer} $\mathrm{rist}_G(v)$ of $v$ as the subgroup of $\mathrm{st}_G(v)$ consisting of elements whose support is contained in $C_v$. We get from definition that 
$$[\mathrm{rist}_G(v),\mathrm{rist}_G(w)]=1$$
if $v$ and $w$ are incomparable vertices. Then for every $n\ge 1$, the direct product
$$\mathrm{Rist}_G(n):=\prod_{v\in \mathcal{L}_n}\mathrm{rist}_G(v)$$
is a well-defined subgroup of $G$, called the \textit{rigid level stabilizer} of the $n$th level.

We say that $G\le\mathrm{Aut}~T$ is a \textit{branch} group if $G$ is level-transitive and for every $n\ge 1$ the subgroup $\mathrm{Rist}_G(n)$ is of finite index in $G$. A \textit{branch action} of a group~$G$ on $T$ is simply a monomorphism $\rho:G\to \mathrm{Aut}~T$ such that $\rho(G)\le \mathrm{Aut}~T$ is a branch group.

\subsection{The structure lattice} The structure lattice of a just-infinite branch group was introduced by Wilson in \cite{Wilson71}. Here we present a slight modification which works for every branch group; compare \cite{AlejandraCSP, NewHorizonsWilson,WilsonBook,Wilson2}.

Let $G\le \mathrm{Aut}~T$ be a branch group. Let $L(G)$ be the collection of all subnormal subgroups of $G$ with finitely many conjugates, i.e. subnormal subgroups whose normalizers are of finite index in $G$. Then $L(G)$ can be endowed with a lattice structure defining for every $H,K\in L(G)$ 
$$H\wedge K:=H\cap K\in L(G) \quad \text{and}\quad  H\vee K:=\langle H,K\rangle\in L(G).$$

For $H,K\in L(G)$ we write $H\le_{\mathrm{va}} K$ if $H\le K$ and $H$ contains the commutator subgroup of a finite index subgroup of $K$. We define the equivalence relation $\sim $ in $L(G)$ as follows: for every $H,K\in L(G)$ we have
$$H\sim K\quad \text{if and only if}\quad H\cap K\le_{\mathrm{va}}H,K.$$
Furthermore, the equivalence relation $\sim$ is a congruence in $L(G)$, i.e. it is compatible with meets and joins. Therefore $L(G)/\sim$ is a well-defined lattice via
$$[H]\wedge [K]:=[H\cap K] \quad \text{and}\quad [H]\vee [K]:=[\langle H,K\rangle],$$
for every $H,K\in L(G)$. We shall write $\mathcal{L}(G):=L(G)/\sim$ and call this lattice the \textit{structure lattice of $G$}. The structure lattice $\mathcal{L}(G)$ satisfies the following properties:

\begin{proposition}[{see {\cite[Section 4]{Wilson71}} or {\cite[Section 3.1]{WilsonBook}}}]
    Let $G\le \mathrm{Aut}~T$ be a branch group. Then the structure lattice  $\mathcal{L}(G)$ satisfies:
    \begin{enumerate}[\normalfont(i)]
    \item it admits the distinguished elements $0:=[\{1\}]$ and $1:=[G]$;
    \item it is distributive;
    \item it is uniquely complemented.
    \end{enumerate}
    Thus $\mathcal{L}(G)$ is a Boolean algebra.
\end{proposition}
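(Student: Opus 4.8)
The plan is to reduce the statement to the combinatorics of finite subsets of the levels of $T$. Property (i) is immediate: both $\{1\}$ and $G$ are subnormal in $G$ with a single conjugate, so they lie in $L(G)$, and for any $H\in L(G)$ the equalities $H\cap\{1\}=\{1\}$, $H\cap G=H$, $\langle H,\{1\}\rangle=H$ and $\langle H,G\rangle=G$ become, after passing to $\sim$-classes, exactly the four identities defining $0=[\{1\}]$ and $1=[G]$. (Here I use that $\sim$ is a congruence, so the operations on $\mathcal{L}(G)$ are well defined.) The content of the proposition is thus (ii) and (iii), and I would derive both from a single structural input.

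That input, and the step I expect to be the main obstacle, is the following branch-group lemma: for every nontrivial $H\in L(G)$ there are a level $n$ and a subset $S\subseteq\mathcal{L}_n$ with $H\sim B_S$, where $B_S:=\prod_{v\in S}\mathrm{rist}_G(v)$ is the \emph{basal} subgroup supported on $S$. The mechanism is that, since $H$ is subnormal with finite-index normalizer, iterated commutation pushes $H$ into the rigid stabilizers of its support and forces
\[
B_S'=\prod_{v\in S}\mathrm{rist}_G(v)'\ \le\ H,
\]
where $S$ is the support of $H$ at a sufficiently deep level $n$; meanwhile $\mathrm{Rist}_G(n)$ has finite index in $G$, so $H\cap B_S$ has finite index in $H$. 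The first containment gives $H\cap B_S\le_{\mathrm{va}}B_S$ (as $B_S/(H\cap B_S)$ is a quotient of the abelian $B_S/B_S'$) and the second gives $H\cap B_S\le_{\mathrm{va}}H$, whence $H\sim B_S$. Making the commutator-pushing argument precise is the genuinely hard, branch-group-specific part; everything downstream is formal.

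Granting the lemma, I would first observe that a basal class does not depend on the level chosen to record it: replacing each $v\in S$ by its children produces $\prod_{w}\mathrm{rist}_G(w)$ (the product over the children $w$ of the vertices in $S$), which is a finite-index subgroup of $B_S$ by $\mathrm{Rist}_G(n+1)\le_f\mathrm{Rist}_G(n)$ and contains its own derived subgroup, so $\prod_{w}\mathrm{rist}_G(w)\sim B_S$. Hence any finite family of classes may be represented by basal subgroups $B_S,B_{S'},\dots$ with $S,S',\dots\subseteq\mathcal{L}_n$ at one common level $n$. Because rigid stabilizers of distinct level-$n$ vertices commute and intersect trivially, the internal direct product $\mathrm{Rist}_G(n)=\prod_{v\in\mathcal{L}_n}\mathrm{rist}_G(v)$ yields
\[
B_S\cap B_{S'}=B_{S\cap S'}\qquad\text{and}\qquad\langle B_S,B_{S'}\rangle=B_{S\cup S'}.
\]
Thus $S\mapsto[B_S]$ maps the power-set Boolean algebra $2^{\mathcal{L}_n}$ onto the classes recorded at level $n$, sending $\cap$ to $\wedge$, $\cup$ to $\vee$, $\emptyset$ to $0$ and $\mathcal{L}_n$ to $[\mathrm{Rist}_G(n)]=1$. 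Distributivity (ii) is then inherited from $2^{\mathcal{L}_n}$, and the complement of $[B_S]$ is $[B_{\mathcal{L}_n\setminus S}]$, since $B_S\cap B_{\mathcal{L}_n\setminus S}=\{1\}$ and $\langle B_S,B_{\mathcal{L}_n\setminus S}\rangle=\mathrm{Rist}_G(n)$ give $[B_S]\wedge[B_{\mathcal{L}_n\setminus S}]=0$ and $[B_S]\vee[B_{\mathcal{L}_n\setminus S}]=1$.

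Finally, the level-$n$ sublattices form an increasing chain, via the children map, whose union is all of $\mathcal{L}(G)$, so distributivity and the existence of complements, checked at each finite level, hold throughout. Uniqueness of complements is then automatic: in any distributive lattice, $a\wedge b=a\wedge b'=0$ together with $a\vee b=a\vee b'=1$ forces $b=b'$, which gives (iii). Properties (i), (ii) and (iii) are precisely the axioms making $\mathcal{L}(G)$ a Boolean algebra, so the proposition follows.
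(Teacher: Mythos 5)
Your overall architecture is sound and is essentially Wilson's original route: the paper itself offers no proof of this proposition (it is cited to \cite[Section 4]{Wilson71} and \cite[Section 3.1]{WilsonBook}), and your reduction --- represent every nontrivial class by a basal subgroup $B_S$ at a single level, transport $\cap$ and $\cup$ from the finite power-set algebra $2^{\mathcal{L}_n}$, read off distributivity and complements there, and get uniqueness of complements formally in any distributive lattice --- is exactly the classical argument. Moreover, your ``key lemma'' is precisely \cref{proposition: representation of subnormal} of this paper, which is proved there (for the purposes of \cref{Theorem: canonical isomorphism of boolean algebras}, not of this proposition) using only the lattice and congruence structure of $\mathcal{L}(G)$, so invoking it here creates no circularity. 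Your level-refinement step, the identities $B_S\cap B_{S'}=B_{S\cap S'}$ and $\langle B_S,B_{S'}\rangle=B_{S\cup S'}$, and the verification $[\mathrm{Rist}_G(n)]=[G]$ are all correct as written.

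There are, however, two genuine problems inside your sketch of the key lemma, only one of which you flagged. First, the phrase ``the support of $H$ at a sufficiently deep level $n$'' silently assumes that $\mathrm{Supp}(H)$ is a \emph{finite} union of cones, i.e.\ clopen. This is a substantive theorem, not bookkeeping: it is \cref{proposition: support of subnormal is clopen}, whose proof requires Leemann's transitivity lemma (\cref{lemma: leemann rigid}) and a delicate construction of infinitely many distinct conjugates of $H$ to rule out an infinite antichain of moved vertices. Without it there is no level $n$ and no $S\subseteq\mathcal{L}_n$ with $H\cap\mathrm{Rist}_G(n)\le B_S$ (taking $S$ to be all level-$n$ vertices moved by $H$ does not work, since $H$ may move descendants of vertices it fixes), and your finite-index step $H\cap B_S\le_f H$ collapses. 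Second, the containment you forecast, $B_S'\le H$, is stronger than what iterated commutation delivers: for a $k$-subnormal $H$ one only obtains $\mathrm{rist}_G(v)^{(k)}\le H$ for moved vertices $v$ (\cref{lemma: subnormal contains rist'..'}), so for $k\ge 2$ your justification that $B_S/(H\cap B_S)$ is a quotient of the abelian $B_S/B_S'$ is unavailable, and $B_S^{(k)}$ need not be the derived subgroup of any finite-index subgroup of $B_S$. The repair is the paper's chain $B_S^{(k)}\le_{\mathrm{va}}B_S^{(k-1)}\le_{\mathrm{va}}\dotsb\le_{\mathrm{va}}B_S$ in \cref{align: equality in structure lattice}: it gives $[B_S^{(k)}]=[B_S]$, hence some $L\le_f B_S$ with $L'\le B_S^{(k)}\le H\cap B_S$, which is what $H\cap B_S\le_{\mathrm{va}}B_S$ actually requires. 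With these two repairs, both supplied by Section 4 of the paper, your downstream Boolean-algebra argument goes through.
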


Finally, observe that for $g\in G$ and $H\in L(G)$, we have $[H]^g=[H^g]$. Thus there is a well-defined action of $G$ on $\mathcal{L}(G)$ induced by the conjugation action of $G$ on its subgroups.

\subsection{Proof of \cref{Theorem: canonical isomorphism of boolean algebras}}
We conclude the section by proving \cref{Theorem: canonical isomorphism of boolean algebras}. We shall fix a branch action $\rho:G\to \mathrm{Aut}~T$ and write $G\le \mathrm{Aut}~T$ for the remainder of the section. First we show that the support of a subgroup with finite index normalizer is clopen. For that we shall need the following lemma in \cite{Leemann}:

\begin{lemma}[{see {\cite[Lemma 2.5]{Leemann}}}]
    \label{lemma: leemann rigid}
    Let $G\le \mathrm{Aut}~T$ be a branch group. Then for every $n\ge 1$, there exists $N_n\ge 1$ such that $\mathrm{Rist}_G(n)$ acts level-transitively on the subtrees rooted at level $n+N_n$.
\end{lemma}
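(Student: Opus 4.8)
The plan is to fix a level $n$ and a vertex $v\in\mathcal{L}_n$ and to control the orbits of the rigid stabilizer $\mathrm{rist}_G(v)$ on the successive levels of the subtree $T_v$. First I would unpack the conclusion vertex by vertex: since $\mathrm{Rist}_G(n)=\prod_{u\in\mathcal{L}_n}\mathrm{rist}_G(u)$ preserves each $T_u$ and only its factor $\mathrm{rist}_G(u)$ acts non-trivially there, saying that $\mathrm{Rist}_G(n)$ acts level-transitively on the subtree $T_w$ rooted at a vertex $w$ of level $n+N_n$ means that the group induced on $T_w$ by $\mathrm{st}_{\mathrm{rist}_G(v)}(w)$ (where $v\in\mathcal{L}_n$ is the ancestor of $w$) is transitive on every level of $T_w$. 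Two reductions are immediate. The $\mathrm{Rist}_G(n)$-orbits on $\mathcal{L}_m$ are exactly the $\mathrm{rist}_G(u)$-orbits on the sets $\mathcal{L}_m\cap T_u$ as $u$ ranges over $\mathcal{L}_n$; and since $G$ is level-transitive and $\mathrm{rist}_G(u)^g=\mathrm{rist}_G(u^g)$, the number $o(m)$ of $\mathrm{rist}_G(u)$-orbits on $\mathcal{L}_m\cap T_u$ does not depend on $u$. Hence $\mathrm{Rist}_G(n)$ has exactly $|\mathcal{L}_n|\cdot o(m)$ orbits on $\mathcal{L}_m$.

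Next I would prove that $o(m)$ stabilizes. It is non-decreasing in $m$, because the parent map $\mathcal{L}_{m+1}\cap T_v\to\mathcal{L}_m\cap T_v$ is $\mathrm{rist}_G(v)$-equivariant and surjective and therefore induces a surjection of orbit sets. It is also bounded independently of $m$, and this is where the branch hypothesis is used: writing $I_n:=[G:\mathrm{Rist}_G(n)]$, a subgroup of index $I_n$ has at most $I_n$ orbits on the transitive $G$-set $\mathcal{L}_m$, so $|\mathcal{L}_n|\cdot o(m)\le I_n$, i.e. $o(m)\le I_n/|\mathcal{L}_n|$ for all $m$. A non-decreasing bounded sequence of positive integers is eventually constant, so there is $N_n\ge 1$ with $o(m)=o(n+N_n)$ for every $m\ge n+N_n$; this is the $N_n$ claimed in the statement.

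The last step, translating stabilization into transitivity of the induced actions, is the crux. Write $M:=n+N_n$ and fix $w\in\mathcal{L}_M\cap T_v$ and $m\ge M$. Since $o(m)=o(M)$, the surjection of orbit sets above is a bijection, so the preimage in $\mathcal{L}_m$ of the $\mathrm{rist}_G(v)$-orbit of $w$ is a single $\mathrm{rist}_G(v)$-orbit $\Omega$. An orbit–stabilizer argument using that tree automorphisms preserve ancestors now shows $\mathrm{st}_{\mathrm{rist}_G(v)}(w)$ is transitive on $\Omega\cap T_w=\mathcal{L}_m\cap T_w$: if $u_1,u_2\in\mathcal{L}_m\cap T_w$ and $g\in\mathrm{rist}_G(v)$ satisfies $u_1^g=u_2$, then applying the parent map $m-M$ times gives $w^g=w$, so $g\in\mathrm{st}_{\mathrm{rist}_G(v)}(w)$. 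As $m\ge M$ was arbitrary, the group induced on $T_w$ by $\mathrm{st}_{\mathrm{rist}_G(v)}(w)$ is level-transitive, which is precisely the assertion. I expect the main difficulty to be conceptual rather than computational: one should not hope that $\mathrm{rist}_G(v)$ itself becomes transitive on deep levels — its orbit count only grows — and the correct transitive action is the local one on each $T_w$, which is exactly what stabilization of $o(m)$ produces.
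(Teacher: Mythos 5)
Your proof is correct. Note that the paper itself gives no argument for this statement---it is quoted directly from \cite[Lemma 2.5]{Leemann}---and your argument (the orbit counts of the finite-index subgroup $\mathrm{Rist}_G(n)$ on successive levels are non-decreasing via the equivariant parent maps and bounded by the index $[G:\mathrm{Rist}_G(n)]$, hence stabilize, and once they stabilize the orbit-set bijections force the stabilizer of each vertex $w$ at the stabilization level to act transitively on every level of $T_w$) is essentially the same orbit-counting proof as in the cited source, so there is nothing to fix.
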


\begin{proposition}
\label{proposition: support of subnormal is clopen}
    Let $G\le \mathrm{Aut}~T$ be a branch group and let $H\le G$ be a subgroup with finitely many conjugates in $G$. Then $\mathrm{Supp}(H)$ is clopen.
\end{proposition}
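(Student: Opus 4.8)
\emph{Approach.} The plan is to prove separately that $\mathrm{Supp}(H)$ is open and that it is closed; openness holds for every subgroup, and the finite-conjugacy hypothesis together with Leemann's \cref{lemma: leemann rigid} will only be needed for closedness.

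First I would dispose of openness. Writing $\mathrm{Supp}(H)=\bigcup_{h\in H}\{\gamma\in\partial T\mid \gamma^h\neq\gamma\}$, it suffices to check that each set in the union is open. If $\gamma^h\neq\gamma$, then $\gamma$ and $\gamma^h$ first separate at some finite level $\ell$; letting $v$ be the level-$\ell$ vertex of $\gamma$ we have $v^h\neq v$, so every path through $v$ is sent into the disjoint cone $C_{v^h}$ and is thereby moved. Hence $C_v\subseteq\mathrm{Supp}(H)$, and $\mathrm{Supp}(H)$ is a union of cones, in particular open.

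For closedness I would argue by contradiction. Suppose $\gamma\in\partial T$ lies in the closure of $S:=\mathrm{Supp}(H)$ but is fixed by $H$. Since an automorphism fixing the end $\gamma$ fixes each vertex $v_n$ of $\gamma$, the group $H$ fixes the whole ray to $\gamma$, so restricting its elements to the subtree $T_{v_n}$ yields a subgroup $H_n\le\mathrm{Aut}~T_{v_n}$ with $S\cap C_{v_n}=\mathrm{Supp}(H_n)$ under the identification $C_{v_n}\cong\partial T_{v_n}$. As $\gamma\in\overline S$, every neighbourhood $C_{v_n}$ meets $S$, so $\mathrm{Supp}(H_n)\neq\emptyset$; as $\gamma\notin S$, the tail $\gamma_n\in\partial T_{v_n}$ of $\gamma$ is fixed by $H_n$, so $\mathrm{Supp}(H_n)\neq\partial T_{v_n}$. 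Now I would feed in \cref{lemma: leemann rigid} applied to $\mathrm{rist}_G(v_n)$: it acts level-transitively on $T_{v_n}$ below some level, hence minimally on $\partial T_{v_n}$, so its only invariant open subsets are $\emptyset$ and $\partial T_{v_n}$. Since $\mathrm{Supp}(H_n)$ is neither, there is $r_n\in\mathrm{rist}_G(v_n)$ with $\mathrm{Supp}(H_n)^{r_n}\neq\mathrm{Supp}(H_n)$. Because $r_n$ fixes $\partial T\setminus C_{v_n}$ pointwise, the support $S^{r_n}=\mathrm{Supp}(H^{r_n})$ agrees with $S$ off $C_{v_n}$ but differs from it inside, so $H^{r_n}\neq H$ and $\emptyset\neq S^{r_n}\triangle S\subseteq C_{v_n}$.

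The contradiction then comes from counting conjugates. As $H$ has finitely many conjugates, the family $\{S^g\mid g\in G\}$ is finite, so some value $S^\star\neq S$ equals $S^{r_n}$ for infinitely many $n$. For these $n$ the set $S^\star$ agrees with $S$ off $C_{v_n}$, and since $\bigcap_n C_{v_n}=\{\gamma\}$, letting $n\to\infty$ forces $S^\star$ to agree with $S$ on $\partial T\setminus\{\gamma\}$; as $\gamma\notin S$ this gives $S^\star=S\cup\{\gamma\}$. But $S^\star$ is a support, hence open, so $\gamma$ is interior to it: some cone $C_{v_m}$ satisfies $C_{v_m}\setminus\{\gamma\}\subseteq S$, i.e. $\mathrm{Supp}(H_m)=C_{v_m}\setminus\{\gamma_m\}$. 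Taking $m$ among the infinitely many indices with $S^{r_m}=S^\star$, the element $r_m$ sends $C_{v_m}\setminus\{\gamma_m\}$ onto $C_{v_m}\setminus\{\gamma_m^{\,r_m}\}$, so $S^{r_m}\cap C_{v_m}$ still omits a point while $S^\star\cap C_{v_m}=C_{v_m}$, contradicting $S^{r_m}=S^\star$. Hence no such $\gamma$ exists and $\mathrm{Supp}(H)$ is clopen. The main obstacle is this closedness step: extracting genuine minimality of the action of $\mathrm{rist}_G(v_n)$ on $\partial T_{v_n}$ from \cref{lemma: leemann rigid}, and then turning the finiteness of the conjugacy class into a contradiction via the shrinking cones $C_{v_n}$, where the delicate point is the limiting case in which the supports differ only near $\gamma$.
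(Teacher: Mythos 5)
The one step that fails is the passage from \cref{lemma: leemann rigid} to minimality of the action of $\mathrm{rist}_G(v_n)$ on $\partial T_{v_n}$. If $v_n$ lies on level $n$, the lemma only asserts that $\mathrm{Rist}_G(n)$ is level-transitive on the subtrees rooted at level $n+N_n$; since rigid stabilizers of distinct level-$n$ vertices have disjoint supports, this means that $\mathrm{rist}_G(v_n)$ acts level-transitively on each subtree $T_u$ with $u$ a descendant of $v_n$ at depth $N_n$ --- it says nothing about transitivity on the levels of $T_{v_n}$ of depth less than $N_n$. Consequently each $\mathrm{rist}_G(v_n)$-orbit on $\partial T_{v_n}$ is dense only in the depth-$N_n$ cone containing it, not in all of $\partial T_{v_n}$, and minimality genuinely fails in examples: for the Grigorchuk group the restriction of $\mathrm{rist}_G(v)$ to $T_v$, for $v$ on the first level, is a copy of $B=\langle b\rangle^G\le \mathrm{st}_G(1)$, which fixes both children of $v$, so the two cones below those children are proper nonempty clopen invariant sets. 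Hence your claim that the only $\mathrm{rist}_G(v_n)$-invariant open subsets of $\partial T_{v_n}$ are $\emptyset$ and $\partial T_{v_n}$ is false in general, and the existence of the element $r_n$ is not established as written. (You correctly flagged this as the delicate point.)

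The gap is repairable, precisely because the complement of your candidate invariant set contains $\gamma$ itself: let $u_n$ be the vertex of $\gamma$ at level $n+N_n$; by the correct, localized consequence of \cref{lemma: leemann rigid}, $\mathrm{rist}_G(v_n)$ is level-transitive on $T_{u_n}$, so the $\mathrm{rist}_G(v_n)$-orbit of $\gamma$ is dense in $C_{u_n}$. Since $\gamma\in\overline S$ and $S$ is open, $S\cap C_{u_n}$ is a nonempty open set, so there is $r\in\mathrm{rist}_G(v_n)$ with $\gamma^{r}\in S$, i.e.\ $\gamma\in S^{r^{-1}}$ while $\gamma\notin S$; taking $r_n:=r^{-1}$ gives $S^{r_n}\neq S$ with $S^{r_n}\triangle S\subseteq C_{v_n}$, after which your pigeonhole endgame runs verbatim (I checked the limit argument forcing $S^\star=S\cup\{\gamma\}$ and the final contradiction $S^{r_m}\cap C_{v_m}=C_{v_m}\setminus\{\gamma^{r_m}\}\neq C_{v_m}=S^\star\cap C_{v_m}$; both are correct, using that $r_m$ stabilizes $C_{v_m}$ and fixes its complement pointwise). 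With this repair your proof is essentially the paper's argument in a different packaging: the paper decomposes $\mathrm{Supp}(H)$ into cones, supposes there are infinitely many, and uses \cref{lemma: leemann rigid} in exactly this localized form to find elements of rigid stabilizers swapping a moved vertex with a fixed one, yielding infinitely many distinct conjugates of $H$; your single limit point $\gamma\in\overline S\setminus S$ neatly supplies the fixed vertices that the paper must engineer through its ``no redundancy'' normalization --- but minimality is not available and cannot be invoked.
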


\begin{proof}
    For any vertex $v\in T$ either $v$ is fixed by $H$ or $C_v\subseteq \mathrm{Supp}(H)$. Thus since $G\le \mathrm{Aut}~T$, we can check which vertices are moved and which ones are fixed by $H$ at each level of $T$ and hence decompose $\mathrm{Supp}(H)$ as the disjoint countable union
    \begin{align}
    \label{align: support decomposition}
        \mathrm{Supp}(H)=\bigsqcup_{v\in V}C_v
    \end{align}
    for some subset $V\subset T$ of pairwise incomparable vertices all moved by $H$. Note that in particular \cref{align: support decomposition} shows that $\mathrm{Supp}(H)$ is always open in $\partial T$. It remains to prove that $\mathrm{Supp}(H)$ is closed. We claim that the subset $V\subseteq T$ in \cref{align: support decomposition} is finite. This implies that  $\mathrm{Supp}(H)$ is a finite union of clopen sets and thus it is clopen itself. Then let us prove our claim. 
    
    Assume by contradiction that $V$ is infinite. We shall assume that $V$ is ordered by graded lexicographical order. Furthermore we may assume that $V$ does not become finite if for any subset $W\subseteq V$ such that 
    \begin{align}
        \label{align: no redundancies}
        \bigsqcup_{w\in W}C_w=C_v
    \end{align}
    for some $v\in T$, one replaces $W$ with $v$ in $V$. Note that for any $g\in G$ we have
    \begin{align}
    \label{align: support decomposition for conjugates}
        \mathrm{Supp}(H^g)=\bigsqcup_{v\in V}C_{v^g}=\bigsqcup_{w\in V^g}C_{w},
    \end{align}
    where $V^g$ consists of infinitely many pairwise incomparable vertices all moved by~$H^g$.
    
    We shall construct infinitely many distinct conjugates of $H$ which yields a contradiction. For that, it is enough to find an infinite sequence of levels $\{\ell_n\}_{n\ge 1}$ and of elements $\{g_n\}_{n\ge 1}\subseteq G$ such that for every $n\ge 1$ the subgroups $H^{g_{n+1}}$ and $H^{g_n}$ have the same action on level~$\ell_n$ but they move different vertices at level $\ell_{n+1}$. We fix $\ell_1=1$ and $g_1=1$. We construct these two sequences by induction on $n\ge 1$. Let $N_n\ge 1$ be such that $\mathrm{Rist}_G(\ell_n)$ acts level-transitively on the subtrees rooted at level $\ell_n+N_n$, which is well-defined by \cref{lemma: leemann rigid}. Now, there is $w_n\in V^{g_n}$, i.e. $w_n$ is moved by $H^{g_n}$, at some level $\ell_{n+1}$ such that:
    \begin{enumerate}[\normalfont(i)]
        \item $\ell_{n+1}>\ell_n+N_n;$
        \item if $v_n$ denotes the unique vertex at level $\ell_n+N_n$ above $w_n$ then there is a descendant $\widetilde{w}_n$ of $v_n$ at level $l_{n+1}$ fixed by $H^{g_n}$.
    \end{enumerate}
    Note that condition (i) is guaranteed by $V^{g_n}$ being infinite while condition (ii) is guaranteed by \cref{align: support decomposition for conjugates} and the assumption in \cref{align: no redundancies}. Let $u_n$ be the unique vertex at level $\ell_n$ above both $w_n$ and $v_n$. Consider $h_n\in \mathrm{rist}_G(u_n)$ such that $w_n^{h_n}=\widetilde{w}_n$, which exists by the level-transitivity of $\mathrm{rist}_G(u_n)$ on the subtree rooted at $v_n$. Let $g_{n+1}:=g_nh_n$. Then $\widetilde{w}_n$ is moved by $H^{g_{n+1}}$ but it is fixed by $H^{g_n}$. Indeed, there exists $h\in H$ such that $w_n^{h^{g_n}}\ne w_n$, and thus
    $$\widetilde{w}_n^{h^{g_{n+1}}}=w_n^{h_nh_n^{-1}g_n^{-1}hg_nh_n}=w_n^{g_n^{-1}hg_nh_n}\ne w_n^{h_n}=\widetilde{w}_n.$$
    However, both subgroups $H^{g_{n+1}}$ and $H^{g_n}$ have the same action on level $\ell_{n}$, concluding the proof.
\end{proof}

Note that the proof above shows that the support of an arbitrary subgroup $H\le \mathrm{Aut}~T$ is open.

We need the following lemma in \cite{Dominik}, which is stated more generally for weakly branch groups, where a \textit{weakly branch} group $G$ is a level-transitive subgroup of $\mathrm{Aut}~T$ such that $\mathrm{Rist}_G(n)\ne 1$ for every $n\ge 1$.

\begin{lemma}[{see {\cite[Lemma 2.16]{Dominik}}}]
\label{lemma: subnormal contains rist'..'}
    Let $G\le \mathrm{Aut}~T$ be a weakly branch group and~$H$ a $k$-subnormal subgroup of $G$. Then $H\ge \mathrm{rist}_G(v)^{(k)}$ for any $v\in T$ moved by $H$.
\end{lemma}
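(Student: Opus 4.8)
The plan is to argue by induction on the subnormal defect $k$, the crucial input being the commutation identity $[\mathrm{rist}_G(v),\mathrm{rist}_G(w)]=1$ for incomparable vertices $v,w$. Fix a subnormal chain $H=H_0\triangleleft H_1\triangleleft\cdots\triangleleft H_k=G$ witnessing that $H$ is $k$-subnormal. The base case $k=0$ is immediate, since then $H=G$ and $\mathrm{rist}_G(v)^{(0)}=\mathrm{rist}_G(v)\le G$ for every $v$. For the inductive step I would regard $H_1$ as a $(k-1)$-subnormal subgroup of $G$; as $H\le H_1$, any vertex $v$ moved by $H$ is moved by $H_1$ as well, so the inductive hypothesis gives $R:=\mathrm{rist}_G(v)^{(k-1)}\le H_1$. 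The goal is then to upgrade this to $R'=\mathrm{rist}_G(v)^{(k)}\le H$, using only that $H\triangleleft H_1$ and that $v$ is moved by $H$.

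Here is the key construction. Since $v$ is moved by $H$, choose $h\in H$ with $w:=v^h\ne v$; as $h$ preserves levels, $v$ and $w$ are distinct vertices of the same level and hence incomparable. Conjugation by $h$ sends $\mathrm{rist}_G(v)$ to $\mathrm{rist}_G(w)$, so $R^h=\mathrm{rist}_G(w)^{(k-1)}$, and by the commutation property for incomparable vertices $R$ and $R^h$ commute elementwise. On the other hand, because $H\triangleleft H_1$ with $h\in H$ and $R\le H_1$, we have $[R,h]\le[H_1,H]\le H$; explicitly $r^{-1}r^h=[r,h]\in H$ for every $r\in R$. This says precisely that $R$ and $R^h$ have the same image in the quotient $H_1/H$, i.e. $\overline{r^h}=\bar r$ for all $r\in R$.

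Now I would combine these two facts in the quotient $H_1/H$. From $[R,R^h]=1$ in $H_1$ we get $[\bar r_1,\overline{r_2^{\,h}}]=1$ for all $r_1,r_2\in R$; substituting $\overline{r_2^{\,h}}=\bar r_2$ yields $[\bar r_1,\bar r_2]=1$. Hence the image $\bar R$ is abelian in $H_1/H$, which is equivalent to $R'\le H$, that is $\mathrm{rist}_G(v)^{(k)}\le H$, completing the induction.

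The step I expect to be the main obstacle is the passage to the quotient: the whole argument hinges on the observation that normality of $H$ in $H_1$ forces conjugation by $h$ to act trivially on $R$ modulo $H$, so that the a priori distinct commuting subgroups $R$ and $R^h$ collapse to a single abelian image, and one must take care that the commutation $[R,R^h]=1$ and the congruence $R^h\equiv R\pmod H$ are genuinely being applied to the same generators. Once this is set up, the derived-subgroup conclusion is automatic; the remaining routine points (that conjugation by a tree automorphism carries $\mathrm{rist}_G(v)$ to $\mathrm{rist}_G(v^h)$ and commutes with taking derived subgroups, and that $[H_1,H]\le H$) require only the definitions and the incomparability of $v$ and $w$.
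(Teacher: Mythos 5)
Your proof is correct, and the paper itself does not reprove this lemma but quotes it from Francoeur's paper, whose argument is essentially the one you give: induction on the subnormal defect, using that $h\in H$ moves $v$ to an incomparable vertex so that $R=\mathrm{rist}_G(v)^{(k-1)}$ and $R^h$ commute, while normality of $H$ in $H_1$ gives $[R,h]\le H$, forcing $[R,R]\le H$. Your quotient formulation in $H_1/H$ is a clean packaging of the same commutator computation, so there is nothing to fix.
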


Now we can give an explicit description of the structure lattice of a branch group:

\begin{proposition}
\label{proposition: representation of subnormal}
    Let $G\le \mathrm{Aut}~T$ a branch group and let $H\in L(G)$. Then 
    $$[H]=\Big[\prod_{v\in \mathrm{Supp}(H)} \mathrm{rist}_G(v)\Big]$$
    in the structure lattice $\mathcal{L}(G)$.
\end{proposition}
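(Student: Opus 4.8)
The plan is to establish the two inequalities $[K]\le[H]$ and $[H]\le[K]$ in $\mathcal{L}(G)$ separately, where I abbreviate $K:=\prod_{v\in\mathrm{Supp}(H)}\mathrm{rist}_G(v)$, and then conclude by antisymmetry. By \cref{proposition: support of subnormal is clopen} I may fix a decomposition $\mathrm{Supp}(H)=\bigsqcup_{v\in V}C_v$ with $V$ finite, its vertices pairwise incomparable and each moved by $H$. Since $\mathrm{rist}_G(w)\le\mathrm{rist}_G(v)$ whenever $C_w\subseteq C_v$, and rigid stabilizers of incomparable vertices commute, $K$ is the internal direct product $\prod_{v\in V}\mathrm{rist}_G(v)$; in particular $K$ is subnormal with finite-index normalizer, so $K\in L(G)$. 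Throughout I will use that the quotient map $L(G)\to\mathcal{L}(G)$ is order preserving (being a lattice homomorphism), so that $A\le B$ as subgroups implies $[A]\le[B]$.

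A preliminary observation that I would isolate first is that $[A]=[A^{(j)}]$ for every $A\in L(G)$ and every $j\ge0$. It suffices to treat $j=1$ and iterate (using that $\sim$ is a congruence): writing $A'=[A,A]$, one has $A\cap A'=A'\le_{\mathrm{va}}A$, witnessed by the finite-index subgroup $A\le_f A$ itself since $[A,A]\le A'$, while $A'\le_{\mathrm{va}}A'$ holds trivially; hence $A\sim A'$.

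For $[K]\le[H]$ I would invoke subnormality. If $H$ is $k$-subnormal, then \cref{lemma: subnormal contains rist'..'} gives $\mathrm{rist}_G(v)^{(k)}\le H$ for every $v\in V$, each vertex of $V$ being moved by $H$. Because $K$ is the direct product of the commuting factors $\mathrm{rist}_G(v)$, its $k$-th derived subgroup is $K^{(k)}=\prod_{v\in V}\mathrm{rist}_G(v)^{(k)}$, which therefore lies in $H$; combining this with the preliminary observation gives $[K]=[K^{(k)}]\le[H]$.

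The reverse inequality $[H]\le[K]$ is where I expect the main difficulty, and it is here that branchness enters decisively. Let $m$ be the largest level occurring in $V$ and set $H_1:=H\cap\mathrm{Rist}_G(m)$. Since $\mathrm{Rist}_G(m)\le_f G$ by definition of a branch group, $H_1\le_f H$, and hence $[H_1]=[H]$ (a finite-index subgroup $H_1\le_f H$ satisfies $H_1\le_{\mathrm{va}}H$ directly). The crux is the containment $H_1\le K$: given $g\in H_1$, write $g=\prod_{w\in\mathcal{L}_m}g_w$ with $g_w\in\mathrm{rist}_G(w)$; every nontrivial factor satisfies $\emptyset\ne\mathrm{Supp}(g_w)\subseteq\mathrm{Supp}(g)\subseteq\mathrm{Supp}(H)$, so $C_w$ meets some $C_v$ with $v\in V$, and as $w$ lies at level $m$ this forces $w$ to equal or descend from $v$, whence $g_w\in\mathrm{rist}_G(w)\le\mathrm{rist}_G(v)\le K$. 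Thus $g\in K$, giving $[H]=[H_1]\le[K]$, and together with the previous paragraph this yields $[H]=[K]$. The delicate points to get right are the bookkeeping of the cone decomposition of $\mathrm{Supp}(H)$ together with its $H$-invariance, and the verification that $K$ genuinely belongs to $L(G)$; the reliance on $\mathrm{Rist}_G(m)$ having finite index is exactly what would fail for a merely weakly branch group.
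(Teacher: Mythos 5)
Your skeleton is essentially the paper's (Francoeur's lemma \cref{lemma: subnormal contains rist'..'} for one inequality, collapsing the derived series since $[A]=[A^{(j)}]$, and a finite-index intersection with a rigid level stabilizer for the other), and both your preliminary observation and the direction $[H]\le[K]$ via $H_1=H\cap\mathrm{Rist}_G(m)$ are correct. But there is a genuine gap at the outset: the identification $K=\prod_{v\in V}\mathrm{rist}_G(v)$ is false in general. By the paper's convention, $v\in\mathrm{Supp}(H)$ means $C_v\subseteq\mathrm{Supp}(H)$, so the product defining $K$ runs over \emph{every} vertex whose cone lies in the support, not only over vertices moved by $H$. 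A vertex $u$ may be fixed by $H$ while $C_u$ is covered by several cones $C_v$ with $v\in V$; then $C_u\subseteq\mathrm{Supp}(H)$ but $C_u$ lies in no single $C_v$, and $\mathrm{rist}_G(u)$ — which can permute the cones $C_v$ below $u$ — is not contained in $\prod_{v\in V}\mathrm{rist}_G(v)$. Concretely, take $G=\mathrm{Aut}~T$ for the binary tree and $H=\mathrm{Rist}_G(2)$ (normal, so $k=1$): here $\mathrm{Supp}(H)=\partial T$, so the root lies in $\mathrm{Supp}(H)$ and $K\supseteq\mathrm{rist}_G(\mathrm{root})=G$, i.e.\ $K=G$, whereas $V=\mathcal{L}_3$ and $\prod_{v\in V}\mathrm{rist}_G(v)=\mathrm{Rist}_G(3)\lneq G$. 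Your claimed containment $K^{(k)}\le H$ then fails for the true $K$: $K'=G'\not\le\mathrm{Rist}_G(2)=H$, since $G/\mathrm{Rist}_G(2)$ is a nonabelian $2$-group. So the direction $[K]\le[H]$ as you derive it, and your justification that $K\in L(G)$, both rest on a false equality (the conclusion is still true, but not established).

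The repair is exactly where the paper spends its final displays, and it uses branchness a second time — so your closing remark locating branchness solely in the $\mathrm{Rist}_G(m)$ step of the other direction is incomplete. Replace each $v\in V$ by all its descendants at a common level $N$, obtaining $V'\subseteq\mathcal{L}_N$; a support bookkeeping argument of the same kind as your $H_1\le K$ step shows $K\cap\mathrm{Rist}_G(N)=\prod_{w\in V'}\mathrm{rist}_G(w)$, which has finite index in $K$ because $\mathrm{Rist}_G(N)\le_f G$, whence $[K]=\bigl[\prod_{w\in V'}\mathrm{rist}_G(w)\bigr]$. One further subtlety: descendants of moved vertices need not be moved by $H$, so \cref{lemma: subnormal contains rist'..'} cannot be applied at level $N$ directly; apply it at the original moved vertices $v\in V$ and use the monotonicity $\mathrm{rist}_G(w)^{(k)}\le\mathrm{rist}_G(v)^{(k)}\le H$ for $w\in V'$ below $v$. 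Then $\bigl(\prod_{w\in V'}\mathrm{rist}_G(w)\bigr)^{(k)}=\prod_{w\in V'}\mathrm{rist}_G(w)^{(k)}\le H$, and your derived-series observation yields $[K]\le[H]$. With this patch your argument coincides with the paper's proof.
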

\begin{proof}
    If $H=\{1\}$ then $\mathrm{Supp}(H)=\emptyset$ and the result is clear. Thus, let $H\le G$ be a non-trivial $k$-subnormal subgroup with finitely many conjugates. Then there exists a vertex $v\in T$ moved by $H$ and by \cref{lemma: subnormal contains rist'..'} we have
    $$H\ge \mathrm{rist}_G(v)^{(k)}.$$
    Now we show that $\mathrm{rist}_G(v)^{(k)}$ is subnormal in $G$ and that it has the same number of distinct conjugates as $H$. First $\mathrm{rist}_G(v)$ is subnormal in $G$ as it is normal in the corresponding rigid level stabilizer, which is itself normal in $G$. Thus $\mathrm{rist}_G(v)^{(k)}$ is subnormal in $G$ too as it is itself normal in $\mathrm{rist}_G(v)$. We know that $N_G(\mathrm{rist}_G(v))=\mathrm{st}_G(v)$, which is of finite index in $G$. Now, since $\mathrm{rist}_G(v)^{(k)}$ is characteristic in $\mathrm{rist}_G(v)$, it is normal in $\mathrm{st}_G(v)$ and therefore
    $$N_G(\mathrm{rist}_G(v)^{(k)})=\mathrm{st}_G(v)$$
    as $\mathrm{Supp}(\mathrm{rist}_G(v)^{(k)})$ is contained in $C_v$. Thus $\mathrm{rist}_G(v)^{(k)}$ has the same number of distinct conjugates as $\mathrm{rist}_G(v)$, namely the number of vertices in the $G$-orbit of $v$ 
    by the orbit-stabilizer theorem.  By the level-transitivity of $G$ this is precisely the number of vertices at the same level as $v$.

    Now by \cref{proposition: support of subnormal is clopen}, the support of $H$ is clopen and a disjoint union of cone sets. Let $\mathrm{Supp}(H)=\bigsqcup_{i=1}^nC_{v_i}$, where we assume as before that each $v_i$ is moved by $H$. Then the above reasoning applies to each $v_i$ and we get
    $$H\ge \prod_{i=1}^{n}\mathrm{rist}_G(v_i)^{(k)}.$$
    We may assume $v_1,\dotsc,v_n$ are all at the same level by replacing if necessary a vertex with all its descendants at a lower level. Then 
    \begin{align}
    \label{align: h contains some direct product of commutators of rists}
        H\ge \prod_{i=1}^{n}\mathrm{rist}_G(v_i)^{(k)}= \big(\prod_{i=1}^n\mathrm{rist}_G(v_i)\big)^{(k)}
    \end{align}
    as the derived subgroup of a direct product is no more than the direct product of the derived subgroups of the factors. Now 
    $$\big(\prod_{i=1}^n\mathrm{rist}_G(v_i)\big)^{(k)}\le_{\mathrm{va}}\big(\prod_{i=1}^n\mathrm{rist}_G(v_i)\big)^{(k-1)}\le_{\mathrm{va}}\dotsb\le_{\mathrm{va}}\prod_{i=1}^n\mathrm{rist}_G(v_i)$$
    and thus
    \begin{align}
        \label{align: equality in structure lattice}
        \Big[\big(\prod_{i=1}^n\mathrm{rist}_G(v_i)\big)^{(k)}\Big]=\Big[\big(\prod_{i=1}^n\mathrm{rist}_G(v_i)\big)^{(k-1)}\Big]=\dotsb=\Big[\prod_{i=1}^n\mathrm{rist}_G(v_i)\Big]
    \end{align}
    in the structure lattice. Hence by \textcolor{teal}{Equations (}\ref{align: h contains some direct product of commutators of rists}\textcolor{teal}{)} and \textcolor{teal}{(}\ref{align: equality in structure lattice}\textcolor{teal}{)} we obtain
    $$H\cap \prod_{i=1}^n\mathrm{rist}_G(v_i)\le_{\mathrm{va}}\prod_{i=1}^n\mathrm{rist}_G(v_i).$$
    Now we have
    $$\prod_{i=1}^n\mathrm{rist}_G(v_i)\le_f \mathrm{Stab}_G(\partial T\setminus \mathrm{Supp}(H))\le G$$
    as $G$ is branch and
    $$\prod_{i=1}^n\mathrm{rist}_G(v_i)=\mathrm{Rist}_G(N)\cap \mathrm{Stab}_G(\partial T\setminus \mathrm{Supp}(H)),$$
    where $N$ is the common level of $T$ at which all the vertices $v_1,\dotsc,v_n$ lie. Since $H\le \mathrm{Stab}_G(\partial T\setminus \mathrm{Supp}(H))$, we also get 
    $$H\cap \prod_{i=1}^n\mathrm{rist}_G(v_i)\le_f H.$$
    Therefore 
    $$ [H]=\Big[\prod_{i=1}^n\mathrm{rist}_G(v_i)\Big]=\Big[\prod_{v\in \mathrm{Supp}(H)}\mathrm{rist}_G(v)\Big]$$
    in the structure lattice, where the second equality follows from 
    \begin{align*}
        \prod_{i=1}^n\mathrm{rist}_G(v_i)=\mathrm{Rist}_G(N)\cap \prod_{v\in \mathrm{Supp}(H)}\mathrm{rist}_G(v) &\le_f\prod_{v\in \mathrm{Supp}(H)}\mathrm{rist}_G(v).\qedhere
    \end{align*}
\end{proof}

\begin{corollary}
    \label{corollary: support is well-defined in the lattice}
    Let $G\le \mathrm{Aut}~T$ be a branch group and let $H,K\in \mathcal{L}(G)$ be such $H\sim K$. Then $\mathrm{Supp}(H)=\mathrm{Supp}(K)$.
\end{corollary}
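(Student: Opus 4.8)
The plan is to deduce the corollary from a single implication, namely that $\le_{\mathrm{va}}$ preserves supports: if $A,B\in L(G)$ satisfy $B\le_{\mathrm{va}} A$, then $\mathrm{Supp}(A)=\mathrm{Supp}(B)$. Granting this, the corollary is immediate: by definition $H\sim K$ means exactly $H\cap K\le_{\mathrm{va}} H$ and $H\cap K\le_{\mathrm{va}} K$, and since $H\cap K=H\wedge K\in L(G)$, applying the implication twice yields $\mathrm{Supp}(H)=\mathrm{Supp}(H\cap K)=\mathrm{Supp}(K)$.

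For the key implication, the inclusion $\mathrm{Supp}(B)\subseteq\mathrm{Supp}(A)$ is clear from $B\le A$. For the reverse, I would fix $\gamma\in\mathrm{Supp}(A)$ and, since $\mathrm{Supp}(B)$ is closed by \cref{proposition: support of subnormal is clopen}, argue that $\gamma$ lies in the closure of $\mathrm{Supp}(B)$ by showing that every cone $C_v$ with $v\in\gamma$ sufficiently deep meets $\mathrm{Supp}(B)$. To set this up, I first observe that some vertex of $\gamma$ is moved by $A$ (the point where $\gamma$ and $\gamma^a$ diverge, for a witnessing $a\in A$) and that consequently every deeper vertex of $\gamma$ is moved by $A$; fix such a deep $v$. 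Writing $k$ for the subnormal defect of $A$, \cref{lemma: subnormal contains rist'..'} gives $A\ge\mathrm{rist}_G(v)^{(k)}$. Using $B\le_{\mathrm{va}} A$, pick $A_0\le_f A$ with $A_0'\le B$ and set $W:=A_0\cap\mathrm{rist}_G(v)^{(k)}$, which is a finite-index subgroup of $\mathrm{rist}_G(v)^{(k)}$. Then $W'\le A_0'\le B$ and $W'$ is supported in $C_v$, so $\emptyset\ne\mathrm{Supp}(W')\subseteq\mathrm{Supp}(B)\cap C_v$ provided $W'\ne 1$.

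Everything therefore reduces to the nontriviality $W'\ne 1$, which I expect to be the main obstacle since it is the only place where the full branch hypothesis is genuinely used; concretely, one must show that $\mathrm{rist}_G(v)^{(k)}$ has no abelian subgroup of finite index. I would prove this in two steps. First, each $\mathrm{rist}_G(u)^{(k)}$ is nonabelian: otherwise $\mathrm{rist}_G(u)$ would be solvable, forcing $\mathrm{Rist}_G(n)$ and hence $G$ to be virtually solvable, contradicting the standard fact that branch groups are not virtually solvable. Second, for any fixed deeper level one has $\mathrm{rist}_G(v)^{(k)}\ge\prod_{u}\mathrm{rist}_G(u)^{(k)}$ over the vertices $u$ below $v$ at that level, and for any abelian subgroup $E$ of a finite direct product $\prod_{i=1}^m H_i$ with every $H_i$ nonabelian one has $[\prod_{i=1}^m H_i:E]\ge 2^m$ (project $E$ into each factor, where it must be proper). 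Choosing the level deep enough so that the number of factors exceeds $\log_2[\mathrm{rist}_G(v)^{(k)}:W]$ then makes $W\cap\prod_u\mathrm{rist}_G(u)^{(k)}$ too small to be abelian, forcing $W$ to be nonabelian, i.e. $W'\ne 1$. The remainder is bookkeeping with supports together with the clopenness supplied by \cref{proposition: support of subnormal is clopen}.
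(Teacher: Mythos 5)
Your proposal is correct, but it takes a genuinely different route from the paper's. The paper deduces the corollary from \cref{proposition: representation of subnormal}: since $[H]=[K]$ forces $\bigl[\prod_{v\in \mathrm{Supp}(H)}\mathrm{rist}_G(v)\bigr]=\bigl[\prod_{v\in \mathrm{Supp}(K)}\mathrm{rist}_G(v)\bigr]$, it picks (using clopenness) a cone $C_v\subseteq\mathrm{Supp}(H)$ disjoint from $\mathrm{Supp}(K)$ and derives a contradiction because the intersection of the two products fails to be $\le_{\mathrm{va}}$ the first one, ``as $\mathrm{rist}_G(v)$ is not virtually abelian'' --- a fact the paper asserts without proof. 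You instead prove the sharper, reusable statement that $B\le_{\mathrm{va}}A$ forces $\mathrm{Supp}(B)=\mathrm{Supp}(A)$ and apply it twice to $H\cap K\le_{\mathrm{va}}H,K$, bypassing \cref{proposition: representation of subnormal} entirely while using the same two underlying ingredients, namely \cref{lemma: subnormal contains rist'..'} and the clopenness of supports from \cref{proposition: support of subnormal is clopen} (which applies to $H\cap K$ since it lies in $L(G)$). Your density argument is sound: every sufficiently deep vertex $v$ of $\gamma\in\mathrm{Supp}(A)$ is moved by $A$, so $W:=A_0\cap\mathrm{rist}_G(v)^{(k)}$ has finite index in $\mathrm{rist}_G(v)^{(k)}$ and $\emptyset\ne\mathrm{Supp}(W')\subseteq\mathrm{Supp}(B)\cap C_v$ once $W'\ne 1$, whence $\gamma$ lies in the closed set $\mathrm{Supp}(B)$. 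The nontriviality step is in fact a bonus of your approach: your two-step argument (if some $\mathrm{rist}_G(u)^{(k)}$ were abelian then $\mathrm{Rist}_G(n)$, hence $G$, would be virtually solvable; and the $2^m$ lower bound on the index of an abelian subgroup of a product of $m$ nonabelian groups, applied to $\prod_u\mathrm{rist}_G(u)^{(k)}\le\mathrm{rist}_G(v)^{(k)}$ at a deep enough level) supplies a proof of exactly the non-virtual-abelianness fact that the paper leaves unjustified, at the cost of invoking the standard fact that branch groups are not virtually solvable (classical; it also follows from Abért's theorem that weakly branch groups satisfy no group law). On balance, the paper's route is shorter given that \cref{proposition: representation of subnormal} must be established anyway for \cref{Theorem: canonical isomorphism of boolean algebras}, while yours is more self-contained and isolates the cleaner monotonicity principle that $\le_{\mathrm{va}}$ preserves supports.
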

\begin{proof}
    By \cref{proposition: representation of subnormal}
    $$\Big[\prod_{v\in \mathrm{Supp}(H)} \mathrm{rist}_G(v)\Big]=[H]=[K]=\Big[\prod_{v\in \mathrm{Supp}(K)} \mathrm{rist}_G(v)\Big].$$
    Now if $\mathrm{Supp}(H)\ne \mathrm{Supp}(K)$ we may assume without loss of generality that there exists $v\in \mathrm{Supp}(H)$ such that $C_v\cap \mathrm{Supp}(K)=\emptyset$. Then

    $$\Big(\prod_{w\in \mathrm{Supp}(H)} \mathrm{rist}_G(w)\Big)\cap \Big(\prod_{w\in \mathrm{Supp}(K)} \mathrm{rist}_G(w)\Big)\not\le_{\mathrm{va}} \prod_{w\in \mathrm{Supp}(H)} \mathrm{rist}_G(w)$$
    as $\mathrm{rist}_G(v)$ is not virtually abelian. This contradicts $H\sim K$.
\end{proof}

\begin{proof}[Proof of \cref{Theorem: canonical isomorphism of boolean algebras}]
    First $\Phi$ is well-defined by \cref{proposition: support of subnormal is clopen} and \cref{corollary: support is well-defined in the lattice}. \cref{proposition: representation of subnormal} yields that $\Phi$ is both surjective and injective. Finally $G$-equivariance of $\Phi$ follows from \cref{align: support decomposition for conjugates}.
\end{proof}



\bibliographystyle{unsrt}

\end{document}